
\documentclass[]{interact}
 \usepackage{color}
 \usepackage{bm}
\usepackage{epstopdf}
\usepackage{caption}
\usepackage{graphicx}
\usepackage{subcaption}
\usepackage{algorithm}
\usepackage{algorithmicx}%
\usepackage{algpseudocode}
\usepackage{listings}

\usepackage{amssymb}
\usepackage{amsfonts}
\usepackage{amsmath}
\usepackage{enumerate,amstext,graphicx,latexsym,amscd}
\usepackage[titletoc, title]{appendix}
\usepackage{bbding}
\usepackage[misc]{ifsym}
\usepackage{ifthen,calc,lastpage,listings}
\usepackage{fancyhdr}
\usepackage{footmisc}
\usepackage[colorlinks,linkcolor=blue,citecolor=blue]{hyperref}
\usepackage[capitalise, nosort]{cleveref}
\crefname{equation}{}{}
\crefname{lem}{Lemma}{Lemmas}
\crefname{thm}{Theorem}{Theorems}
\crefname{assum}{Assumption}{Assumptions}
  \usepackage{paralist}
  \usepackage{graphics} 
  \usepackage{epsfig} 
 \usepackage{epstopdf}
\usepackage{geometry}
\usepackage{dsfont}
\usepackage{float}
 \usepackage{booktabs,multirow}
\usepackage[numbers,sort&compress]{natbib}
\bibpunct[, ]{[}{]}{,}{n}{,}{,}

\theoremstyle{plain}
\newtheorem{theorem}{Theorem}[section]
\newtheorem{lemma}[theorem]{Lemma}
\newtheorem{corollary}[theorem]{Corollary}

\theoremstyle{definition}
\newtheorem{definition}[theorem]{Definition}

\theoremstyle{remark}
\newtheorem{remark}{Remark}

\newtheorem{assum}{Assumption}
\newcommand{\proj}[0]{ {\bf proj}}
\newcommand{\conv}[1]{{\bf conv}\left\{ {#1} \right\}}
\newcommand{\R}{\,{\mathbb R}}

\begin{document}


\title{Point Convergence Analysis of the Accelerated Gradient Method for Multiobjective Optimization: Continuous and Discrete}

\author{
\name{Yingdong Yin\textsuperscript{a}}
\affil{\textsuperscript{a}National Center for Applied Mathematics in Chongqing, Chongqing Normal University, Chongqing, China}
}

\maketitle

\begin{abstract}
This paper investigates the point convergence of accelerated gradient methods for multiobjective optimization, in both continuous and discrete settings. We address the open problems of whether the solution trajectory of the multiobjective inertial gradient-like dynamical system (MAVD) with asymptotic vanishing damping converges when \(\alpha = 3\), and whether the sequence generated by the multiobjective Nesterov accelerated method (MAG) converges to a weakly Pareto optimal solution. For the continuous system (MAVD) with \(\alpha = 3\), we prove that the trajectory \(x(t)\) converges to a weakly Pareto optimal solution. For the discrete case, we propose a multiobjective accelerated gradient method with a generalized momentum factor (MAG-GM), and prove that the generated sequence \(\{x_k\}\) converges to a weakly Pareto optimal solution.
\end{abstract}

\begin{keywords}
Multiobjective optimization;  Dynamical system; Accelerated gradient method; Lyapunov analysis; Point convergence. 
\end{keywords}
\section{Introduction}
In the setting of finite-dimensional real vector space $\mathbb{R}^n$ and the general Euclidean inner product $\langle\cdot ,\cdot \rangle$, this paper considers the following unconstrained multiobjective optimization problem:
\begin{equation}\label{eq:MOP}
    \min_{x\in \mathbb{R}^n}F(x):=(f_1(x),\cdots, f_m(x)),\tag{MOP}
\end{equation}
where each objective function $f_i:\mathbb{R}^n\to \mathbb{R}$ is continuously differentiable. Solving \cref{eq:MOP} involves seeking a solution in the sense of Pareto optimality. When $m=1$, this solution is also the optimal solution in the usual sense for single-objective optimization. In the general convex case, many algorithms for solving \cref{eq:MOP} often only obtain a weakly Pareto optimal solution; roughly speaking, no feasible point in $\mathbb{R}^n$ can be found that is strictly worse than this solution.

In recent years, iterative algorithms for solving \cref{eq:MOP} have been extensively studied. Filege et al. proposed the multiobjective steepest descent method (MSD) \cite{fliege2000steepest}, which obtains the multiobjective steepest descent direction  by solving a quadratic programming problem. For multiobjective composite optimization problems, Tanabe et al. proposed a class of multiobjective proximal gradient methods (MPG) for solution \cite{tanabe2019proximal}. Under evaluation by a merit function proposed by the authors themselves \cite{tanabe2024new}, this method achieves the same sublinear convergence rate $O(1/k)$ as MSD \cite{tanabe2023convergence}. Based on this, Tanabe et al. further proposed the multiobjective accelerated proximal gradient method (MAPG) \cite{tanabe2023accelerated}, ultimately achieving a faster sublinear convergence rate $O(1/k^2)$. This convergence rate is consistent with the Nesterov accelerated method \cite{nesterov2018lectures} and the FISTA \cite{beck2017first} in single-objective optimization. Both the steepest descent method and the accelerated method are referred to as first-order algorithms because the iterative point sequence does not use curvature information. For non-first-order multiobjective iterative algorithms, refer to \cite{fliege2009newton,chen2023barzilai,morovati2016barzilai,prudente2024global,povalej2014quasi,wang2019extended,ansary2015modified,qu2011quasi}.

Using gradient dynamical systems provides a new perspective for studying first-order algorithms, and their convergence analysis often shares similar analytical processes with discrete algorithms. Attouch et al. proposed the following \textit{multiobjective gradient system} as the continuous version of MSD \cite{Attouch2014}:
\begin{equation}\label{eq:MOG}
    \dot x(t)+\proj_{C(x(t))}(0)=0,\tag{MOG}
\end{equation}
where $C(x):=\conv{\nabla f_i(x):i=1,\cdots,m}$ and $\proj_{C(x)}(0)$ denotes the projection of the zero vector onto $C(x)$. Using a merit function, the sublinear convergence rate $O(1/t)$ of the solution trajectory of \cref{eq:MOG} can be characterized \cite{yin2025multiobjective2}, which corresponds to the convergence rate result of MPG. Attouch et al. also proposed a class of accelerated gradient dynamical systems \cite{attouch2015multiibjective}. Based on this, Sonntag et al. \cite{sonntag2024fastSIAMOptimization} proposed the following second-order equation, the \textit{multiobjective
 inertial gradient-like dynamical system with asymptotic vanishing damping}:
\begin{equation}\label{eq:MAVD}
    \frac{\alpha }{t}\dot x(t)+\proj_{C(x(t))+\ddot x(t)}(0)=0,\tag{MAVD}
\end{equation}
where $\alpha >0$. When $\alpha \ge 3$, the solution trajectory of \cref{eq:MAVD} achieves a faster sublinear convergence rate $O(1/t^2)$, and when $\alpha > 3$, it converges to a weakly Pareto optimal solution. For other second-order multiobjective gradient dynamical systems, refer to \cite{boct2024inertial,luo2025accelerated,yin2025multiobjective}. When $\alpha = 3$, Sonntag provided a discrete version of \cref{eq:MAVD}, which is also the multiobjective Nesterov accelerated method, with the following iterative scheme:
\begin{equation}\label{eq:MAG}
    \left\{\begin{aligned}
        y_k&=x_k+\frac{k-1}{k+2}(x_k-x_{k-1}),\\
        x_{k+1}& = y_k -s\proj_{C(y_k)}\left(\frac{1}{s}(y_k-x_k)\right).
    \end{aligned}\right.\tag{MAG}
\end{equation}
This algorithm achieves a faster sublinear convergence rate of $O(1/k^2)$, and any cluster point of the iterative sequence is a weakly Pareto optimal solution. Therefore, two natural and corresponding problems remain unresolved:
\begin{itemize} 
\item \textit{When $\alpha = 3$, does the solution trajectory of \cref{eq:MAVD} converge to a weakly Pareto optimal solution?} 
\item \textit{Does the generated sequence of \cref{eq:MAG} converge to a weakly Pareto optimal solution?}
\end{itemize} 

In fact, when $m=1$, \cref{eq:MAVD} reduces to the following dynamical system:
\begin{equation}\label{eq:AVD}
    \ddot x(t)+\frac{\alpha }{t}+\nabla f(x(t))=0,\tag{AVD}
\end{equation}
which was first proposed by Su et al. \cite{su2016differential}, and it and its variants have been extensively studied \cite{wang2021search,attouch2022damped,attouch2022first,attouch2018fast,attouch2019fast,chen2019first,luo2022differential}. \cref{eq:AVD} corresponds to the Nesterov accelerated algorithm, and both also share a long-standing unresolved problem: whether the solution trajectory and the generated sequence converge when $\alpha = 3$, respectively. This problem was recently solved independently by Jang and Ryu \cite{Jang2025Point} and Bo\c{t} et al. \cite{boct2025iteratesnesterovsacceleratedalgorithm}. The key to solving this problem lies in analyzing the original Lyapunov function from a different perspective, thereby establishing a connection between the cluster points of the solution trajectory (generated sequence) and the limit of the Lyapunov function. Building on their methods and overcoming the difficulties caused by the non-negativity and non-monotonic decrease of the Lyapunov function in multiobjective optimization, we successfully resolve the aforementioned problems. Specific contributions are as follows:

\begin{itemize}
    \item \textbf{Trajectory convergence}: For the case $\alpha = 3$ in \cref{eq:MAVD}, we provide a convergence analysis of its trajectory $x(t)$, proving that $\lim_{t\to \infty }x(t)=x^*\in \mathbb{R}$ exists and that $x^*$ is a weakly Pareto optimal solution of \cref{eq:MOP}.
    \item \textbf{Point convergence}: For the algorithm \cref{eq:MAG}, we adopt a more general parameter selection method to construct a new algorithm. This selection method refers to \cite{tanabe2022globally}, with the specific iterative scheme as follows:
    \begin{equation}\label{eq:intro-algorithm-iterschme}
    \left\{\begin{aligned}
    t_{k+1} &=\sqrt{t_k^2 - a t_k + b} + \frac{1}{2}, \\
    y_k &= x_k + \frac{t_k - 1}{t_{k+1}}(x_k - x_{k-1}), \\
    x_{k+1} &= y_k - s \cdot \proj_{C(y_k)}\left(\frac{1}{s}(y_k-x_k)\right).
    \end{aligned}\right.  
    \end{equation}
    We prove that the iterative sequence $\{x_k\}$ generated by \cref{eq:intro-algorithm-iterschme} satisfies $\lim_{k\to \infty }x_k=x^*\in \mathbb{R}$ and that $x^*$ is a weakly Pareto optimal solution of \cref{eq:MOP}.
\end{itemize}
The paper is structured as follows. \cref{sec:Preliminary} introduces the necessary notations, concepts of Pareto optimality, and the foundational assumptions. The convergence analysis of \cref{eq:MAVD} for the case $\alpha =3$ is presented in \cref{sec:Dyanmical-System}. \cref{sec:algo} is dedicated to the discrete-time setting, where we propose a multiobjective accelerated gradient algorithm with generalized momentum and analyze its convergence properties. 
\section{Preliminary}\label{sec:Preliminary}
\subsection{Notation}
In this paper, $\mathbb{R}^d$ denotes a $d$-dimensional Euclidean space with the inner product $\langle \cdot, \cdot \rangle$ and the induced norm $\|\cdot\|$. For any vectors $a, b \in \mathbb{R}^d$, we say $a \leq b$ if $a_i \leq b_i$ holds for all $i = 1, \ldots, d$; the relations $<$, $\geq$, and $>$ are defined analogously. The nonnegative  and positive orthants are denoted respectively by
$$\mathbb{R}_{+}^d:=\{x\in \mathbb R^d : x\geq 0\}, ~~\mathbb{R}^d_{++}:=\{x\in \mathbb{R}^d:x>0\}.$$ 
The set $\Delta^m := \{ \theta \in \mathbb{R}^m : \theta \geq 0 \text{ and } \sum_{i=1}^m \theta_i = 1 \}$ is the positive unit simplex. Given a set of vectors $\{\eta_1, \ldots, \eta_m\} \subseteq \mathbb{R}^d$, their convex hull is defined as $\conv {\eta_1, \ldots, \eta_m} := \{ \sum_{i=1}^m \theta_i \eta_i : \theta \in \Delta^m \}$. For a closed convex set $C \subseteq \mathbb{R}^d$, the projection of a vector $x$ onto $C$ is  $\proj_{ C}(x) := \arg\min_{y \in C} \|y - x\|^2$. 
\subsection{Pareto optimality}
In multiobjective optimization, a single solution that minimizes all conflicting objectives simultaneously is generally unavailable. Instead, we seek Pareto optimal solutions, which represent the best possible trade-offs. The following definitions formalize this concept of optimality.
\begin{definition}[\cite{miettinen1999nonlinear}]\label{def:defofpareto}
    Consider the multiobjective optimization problem \cref{eq:MOP}.
    \begin{itemize}
        \item[$\rm (i)$] A point $x^* \in \mathbb{R}^n$ is a Pareto optimal solution  if there exists no \(y\in \mathbb R^n\) that \(F(y)\leq F(x^*)\) and $F(y)\neq F(x^*)$. The set of all Pareto optimal solutions is called the Pareto set, denoted by $\mathcal{P}$.  Its image under $F$, denoted by \(F(\mathcal P)\), is called  the Pareto front.
        \item[$\rm (ii)$] A point $x^*\in \mathbb{R}^n$ is a locally Pareto optimal solution if there exists $\delta >0$ such that $x^*$ is Pareto optimal in $B_{\delta}(x^*)$. 
        \item[$\rm (iii)$] A point $x^* \in \mathbb{R}^n$ is a weakly Pareto optimal solution if there exists no \(y\in \mathbb R^n\) that \(F(y)<F(x^*)\). The set of all weakly Pareto optimal solutions is called the weak Pareto set and is denoted by $\mathcal{P}_w$. Its image under $F$, denoted by \(F(\mathcal P_w)\),  is called the weak Pareto front.
        \item[$\rm (iv)$]  A point $x^*\in \mathbb{R}^n$ is a locally weaklly Pareto optimal solution if there exists $\delta >0$ such that $x^*$ is weakly Pareto optimal in $B_{\delta}(x^*)$. 
    \end{itemize}  
\end{definition}

\begin{definition}\label{def:KKTcondition}
    A point $x^* \in \mathbb{R}^n$ is called Pareto critical if there exists $\theta \in \Delta^m $ satisfying
    \begin{equation}\label{eq:KKTpoint}
    \sum_{i=1}^m \theta_i \nabla f_i(x^*) = 0.
    \end{equation}
    The set of all Pareto critical points is called the Pareto critical set and is denoted by $\mathcal{P}_c$.
\end{definition}

 \begin{lemma}
    The following statements hold:
    \begin{itemize}
        \item[$\rm(i)$] If $x^*\in \mathbb{R}^n$ is locally weakly Pareto optimal for \cref{eq:MOP}, then $x^*$ is Pareto critical for \cref{eq:MOP};
        \item[$\rm(ii)$] If each component function $f_i$ $(i = 1, 2, \cdots, m)$ is convex and $x^*\in \mathbb{R}^n$ is Pareto  critical for \cref{eq:MOP}, then $x^*$ is weakly Pareto optimal for \cref{eq:MOP};
        \item[$\rm(iii)$] If each component function $f_i$ $(i = 1, 2, \cdots, m)$  is strictly convex and $x^*\in \mathbb{R}^n$ is Pareto critical for \cref{eq:MOP}, then $x^*$ is Pareto optimal for \cref{eq:MOP}. 
    \end{itemize}
\end{lemma}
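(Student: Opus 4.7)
The plan is to prove the three parts in the order (i), (ii), (iii), using contrapositive/separation for (i), the convex subgradient inequality with the KKT multipliers for (ii), and a convex-combination reduction to (ii) for (iii). The key object throughout is the set $C(x^*)=\conv{\nabla f_1(x^*),\ldots,\nabla f_m(x^*)}$, which is nonempty, closed, and convex; Pareto criticality of $x^*$ is exactly $0\in C(x^*)$.

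For part (i), I would argue by contrapositive. Suppose $x^*$ is \emph{not} Pareto critical, so $0\notin C(x^*)$. Since $C(x^*)$ is a closed convex set not containing the origin, strict separation provides a direction $d\in\mathbb{R}^n$ with $\langle g,d\rangle<0$ for every $g\in C(x^*)$; in particular $\langle\nabla f_i(x^*),d\rangle<0$ for each $i=1,\ldots,m$. A first-order Taylor expansion of each $f_i$ at $x^*$ along $d$ then yields $\eta>0$ such that $f_i(x^*+td)<f_i(x^*)$ for every $t\in(0,\eta]$ and every $i$, so $F(x^*+td)<F(x^*)$. Taking $t$ small enough to keep $x^*+td\in B_\delta(x^*)$ contradicts local weak Pareto optimality. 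The only mild subtlety is packaging the uniform-in-$i$ expansion cleanly, but with finitely many objectives this is just taking the minimum of finitely many valid radii.

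For part (ii), assume each $f_i$ is convex and fix $\theta\in\Delta^m$ witnessing $\sum_{i=1}^{m}\theta_i\nabla f_i(x^*)=0$. Suppose for contradiction that some $y\in\mathbb{R}^n$ satisfies $F(y)<F(x^*)$, i.e.\ $f_i(y)-f_i(x^*)<0$ for all $i$. Because $\theta\in\Delta^m$, at least one coordinate $\theta_{i_0}>0$, hence the weighted sum $\sum_i\theta_i(f_i(y)-f_i(x^*))$ is strictly negative. On the other hand, the convex subgradient inequality gives $f_i(y)-f_i(x^*)\geq\langle\nabla f_i(x^*),y-x^*\rangle$, and summing with weights $\theta_i$ yields $\sum_i\theta_i(f_i(y)-f_i(x^*))\geq\bigl\langle\sum_i\theta_i\nabla f_i(x^*),\,y-x^*\bigr\rangle=0$, the required contradiction.

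For part (iii), I would reduce to (ii). By (ii), strict convexity implies convexity, so $x^*$ is already weakly Pareto optimal. Assume for contradiction that $x^*$ is not Pareto optimal: there exists $y\neq x^*$ with $F(y)\leq F(x^*)$ and $F(y)\neq F(x^*)$. For any $\lambda\in(0,1)$, set $z_\lambda=\lambda y+(1-\lambda)x^*\neq x^*$; strict convexity of each $f_i$ gives $f_i(z_\lambda)<\lambda f_i(y)+(1-\lambda)f_i(x^*)\leq f_i(x^*)$ for every $i$, so $F(z_\lambda)<F(x^*)$, contradicting the weak Pareto optimality of $x^*$. The main obstacle in the whole lemma is really part (i): ensuring the separation argument yields a \emph{uniform} descent direction for all $m$ objectives simultaneously; parts (ii) and (iii) are short once the KKT identity and strict convexity are invoked.
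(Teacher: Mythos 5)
Your proof is correct. Note that the paper states this lemma without proof (it is a standard result, cf.\ Fliege--Svaiter and Miettinen \cite{fliege2000steepest,miettinen1999nonlinear}), so there is no in-paper argument to compare against; your three steps --- strict separation of $0$ from the compact convex set $C(x^*)$ to produce a common descent direction for (i), the weighted subgradient inequality with the multipliers $\theta\in\Delta^m$ for (ii), and the strict-convexity midpoint argument reducing (iii) to (ii) --- are exactly the standard proofs and contain no gaps.
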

\subsection{Merit function}

In single-objective optimization, the function value $f(x) - f(x^*)$ is commonly used to measure the progress of an algorithm. For \cref{eq:MOP}, however, there is no single optimal value $f(x^*)$. To address this, a merit function $u_0(x)$ is introduced to play an analogous role \cite{tanabe2024new}:
\begin{equation}\label{eq:meritfunction}  
u_0(x) := \sup_{z \in \mathbb{R}^n} \min_{i=1,\ldots,m} \left( f_i(x) - f_i(z) \right).  
\end{equation}  
It serves as a valuable tool for quantifying convergence behavior in the multiobjective setting, as detailed in the following  theorem.
\begin{theorem}[\cite{tanabe2024new}]\label{thm:weakpareto} 
    Let $u_0:\mathbb R^n \to \mathbb{R}$ be defined as in \cref{eq:meritfunction}. Then,
    \begin{itemize} 
        \item[\(\rm (i)\)]$u_0(x) \geq 0$ for all $x \in \mathbb{R}^n$;
        \item[\(\rm (ii)\)] $x \in \mathbb{R}^n$ is a weakly Pareto optimal solution of \cref{eq:MOP} if and only if $u_0(x) = 0$;
        \item[\(\rm (iii)\)] $u_0(x)$ is lower semicontinuous.
    \end{itemize}
\end{theorem}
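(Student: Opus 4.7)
The plan is to extract all three items directly from the definition
\[
u_0(x)=\sup_{z\in\mathbb{R}^n}\min_{i=1,\dots,m}\bigl(f_i(x)-f_i(z)\bigr),
\]
using only elementary properties of suprema, minima, and continuous functions; since the statement is a classical result of Tanabe et al., no substantial innovation is needed, and my goal is just to verify that each claim is a direct consequence of the sup-min structure.

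Items (i) and (ii) I would handle together by reading off the sign of $u_0(x)$ from the sup-min. For (i), probing the outer supremum at the admissible choice $z=x$ collapses the inner min to $0$, so $u_0(x)\geq 0$ is immediate. For (ii), the plan is to translate weak Pareto optimality into a uniform-in-$z$ sign condition on the inner minimum. If $x^*$ is weakly Pareto optimal, then for every $z$ the inequality $f_i(z)<f_i(x^*)$ must fail for at least one index $i$, hence $\min_i(f_i(x^*)-f_i(z))\leq 0$ for every $z$, and taking the sup gives $u_0(x^*)\leq 0$; combined with (i) this forces $u_0(x^*)=0$. The reverse direction I would argue by contraposition: a witness $z$ to the failure of weak Pareto optimality satisfies $f_i(z)<f_i(x^*)$ for every $i$, making the inner min at that $z$ strictly positive and therefore $u_0(x^*)>0$.

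For (iii), I would write $u_0$ as the pointwise supremum of the family $\{g_z\}_{z\in\mathbb{R}^n}$ with $g_z(x):=\min_i(f_i(x)-f_i(z))$. Each $g_z$ is continuous as the minimum of finitely many continuous functions, and the pointwise supremum of an arbitrary family of continuous (or even merely lower semicontinuous) functions is lower semicontinuous; the unboundedness of the index set $\mathbb{R}^n$ does not obstruct this closure property. Since every item reduces to a line of definition chasing, I do not anticipate any genuine obstacle; the one point worth flagging is that $u_0$ may a priori take the value $+\infty$, but this does not interfere with nonnegativity, the characterization of weak Pareto optimality, or lower semicontinuity. The theorem is best regarded as a foundational tool supporting the convergence analyses of the subsequent sections.
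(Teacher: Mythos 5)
Your proof is correct: all three items follow exactly as you argue from the sup--min structure (taking $z=x$ for (i), translating weak Pareto optimality into the sign of the inner minimum for (ii), and using that a pointwise supremum of continuous functions is lower semicontinuous for (iii)), and your caveat about $u_0$ possibly taking the value $+\infty$ is the right one to flag. The paper itself gives no proof of this statement --- it is imported verbatim from the cited reference of Tanabe et al. --- and your argument is the standard one given there, so there is nothing to compare beyond noting that your derivation is a faithful reconstruction.
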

\subsection{Assumption}
The convergence analysis in this work relies on the following standard assumptions regarding the objective functions and the solution set.
\begin{assum} \label{assum:Lj-muj} Each function $f_i:\mathbb{R}^n\to \mathbb{R}$, $i=1,\cdots,m$ is convex and continuously differentiable, with Lipschitz gradients, i.e., {there exist $L_i\in \R$ such that } $\|\nabla f_i({x})-\nabla f_i({y})\|\le L_i\|{x}-{y}\|$
    for all  ${x},{y}\in \mathbb{R}^n$. Let $L:\max_{i=1,\cdots,m} L_i$. 
\end{assum}
     
\begin{assum}\label{assum:levelset}	The level set $\mathcal{L}(F,F(x^*))=\{{x}\in\mathbb{R}^n :F({x})\le F(x^*)\}$ is bounded for any $x^*\in \mathbb{R}^n$. Let
    $$R_{x^*}:=\sup_{x\in \mathcal{L}(F, F(x^*))}\|x\|<+\infty.  $$
\end{assum}
\begin{remark}
\cref{assum:Lj-muj} is a general assumption regarding the objective function. \cref{assum:levelset} is stronger than those imposed on the solution set in \cite{sonntag2024fastSIAMOptimization,sonntag2024fast,tanabe2023accelerated,tanabe2019proximal}. In fact, the subsequent conclusions on convergence rates in this paper can hold under weaker conditions than \cref{assum:levelset}. However, the proofs of the core results in this paper—the convergence of trajectories and point sequences—rely on \cref{assum:levelset}.
\end{remark}
\section{Dynamical system}\label{sec:Dyanmical-System}
Consider the following Cauchy problem  
\begin{equation}\label{eq:CP} 
\left\{
\begin{aligned}
&\frac{\alpha }{t}\dot x(t) + \proj_{C(x(t)) + \ddot x(t)}(0) = 0, \\
&x(1) = x_0, ~~ \dot x(1) = 0,
\end{aligned}
\right.\tag{CP}
\end{equation}
where $\alpha > 0$. The proof of the existence of a solution to \cref{eq:CP} has been provided in \cite{sonntag2024fastSIAMOptimization}, meaning that there exists a smooth function $x:[1, +\infty) \to \mathbb{R}^n$ that satisfies \cref{eq:CP} for almost all $t \ge 1$, and its derivative $\dot x$ is absolutely continuous on any closed interval of $[1, +\infty)$. Moreover, under the condition that the objective functions $f_i$ are all bounded from below, the smooth function $x(t)$ satisfies $\lim_{t \to \infty} u_0(x(t)) = 0$ and the existence of $\lim_{t \to \infty} f_i(x(t)) = f_i^\infty \in \mathbb{R}$. For the case $0 < \alpha \le 3$, we adopt different proof techniques, which share a similar analytical process to the convergence analysis of discrete algorithms in \cref{sec:algo}.

Let $ x: [1, +\infty) \to \mathbb{R}^m $ be a solution to the problem \cref{eq:CP}. For $ i = 1, \cdots, m $, define the following energy function:  
\begin{equation}\label{eq:energy-function-W}
\mathcal{W}_i :=[1,+\infty )\to \mathbb{R},~~t\mapsto  f_i(x(t)) + \frac{1}{2} \|\dot{x}(t)\|^2.
\end{equation}

\begin{lemma}\label{lem:Dynamical-system-lemma1}
Suppose \cref{assum:Lj-muj} holds. Let $ x: [1, +\infty) \to \mathbb{R}^n $ be a solution to the problem \cref{eq:CP}, and let $ \mathcal{W}_i $ be defined as in \cref{eq:energy-function-W}. Then, 
\begin{itemize}    
\item[(i)] $ \mathcal{W}_i(t) $ is monotonically non-increasing for $ i = 1, \cdots, m $;  
\item[(ii)] Assuming $ f_i $ is bounded, then $ \lim_{t \to \infty} \mathcal{W}_i(t) = \mathcal{W}_i^* \in \mathbb{R} $ exists, and  
$$
\int_{1}^{\infty} \frac{1}{t} \|\dot{x}(t)\|^2 \, dt < +\infty.
$$
\end{itemize}
\end{lemma}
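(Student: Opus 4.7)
The plan is to exploit the variational characterization of the projection to turn the differential inclusion implicit in \cref{eq:CP} into a pointwise scalar inequality, and then recognize the right-hand side as the derivative of $\mathcal{W}_i$. Setting $v(t):=\proj_{C(x(t))+\ddot x(t)}(0)$, the Cauchy problem says $v(t)=-\frac{\alpha}{t}\dot x(t)$. Since $\nabla f_i(x(t))+\ddot x(t)\in C(x(t))+\ddot x(t)$ for every $i$, the projection inequality $\langle v(t), w-v(t)\rangle\ge 0$ for all $w$ in that set yields, after plugging in $w=\nabla f_i(x(t))+\ddot x(t)$ and $v(t)=-\frac{\alpha}{t}\dot x(t)$, the key estimate
\begin{equation*}
\langle \dot x(t),\,\nabla f_i(x(t))+\ddot x(t)\rangle \;\le\; -\frac{\alpha}{t}\,\|\dot x(t)\|^{2},\qquad i=1,\dots,m.
\end{equation*}

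Next I would compute $\frac{d}{dt}\mathcal{W}_i(t)$ directly from the definition \cref{eq:energy-function-W}. The chain rule gives
\begin{equation*}
\frac{d}{dt}\mathcal{W}_i(t) \;=\; \langle \nabla f_i(x(t)),\dot x(t)\rangle + \langle \dot x(t),\ddot x(t)\rangle \;=\; \langle \dot x(t),\,\nabla f_i(x(t))+\ddot x(t)\rangle,
\end{equation*}
which by the previous display is bounded above by $-\frac{\alpha}{t}\|\dot x(t)\|^{2}\le 0$. This proves (i). Here one needs to be a little careful with regularity, since by \cite{sonntag2024fastSIAMOptimization} $\dot x$ is only absolutely continuous, so $\ddot x$ exists almost everywhere and the identity above holds a.e.; the monotonicity of $\mathcal{W}_i$ then follows by integrating the a.e.\ inequality.

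For (ii), boundedness of $f_i$ (from below) together with the trivial bound $\mathcal{W}_i(t)\ge f_i(x(t))$ shows that $\mathcal{W}_i$ is bounded below. Combined with (i), the monotone convergence theorem yields the existence of $\mathcal{W}_i^*:=\lim_{t\to\infty}\mathcal{W}_i(t)\in\mathbb{R}$. Integrating the a.e.\ inequality $\frac{d}{dt}\mathcal{W}_i(t)\le -\frac{\alpha}{t}\|\dot x(t)\|^{2}$ from $1$ to $T$ gives
\begin{equation*}
\alpha\int_{1}^{T}\frac{1}{t}\|\dot x(t)\|^{2}\,dt \;\le\; \mathcal{W}_i(1)-\mathcal{W}_i(T),
\end{equation*}
and letting $T\to\infty$, together with finiteness of $\mathcal{W}_i^*$, yields the desired integrability $\int_{1}^{\infty}\frac{1}{t}\|\dot x(t)\|^{2}\,dt<+\infty$.

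The only real obstacle is a conceptual one at the start: recognizing that the projection inequality must be tested against the points $\nabla f_i(x(t))+\ddot x(t)$ (not against arbitrary elements of $C(x(t))$) to produce a clean scalar bound for each $i$ separately. Once this step is made, everything else—chain rule, monotonicity, and the Opial-type integral estimate—is routine.
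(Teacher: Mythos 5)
Your proposal is correct and follows essentially the same route as the paper: the paper's proof of (i) invokes the projection theorem to get exactly your inequality $\langle \nabla f_i(x(t)) + \ddot x(t) + \tfrac{\alpha}{t}\dot x(t), \dot x(t)\rangle \le 0$ and identifies the left-hand side with $\tfrac{d}{dt}\mathcal{W}_i(t) + \tfrac{\alpha}{t}\|\dot x(t)\|^2$, and part (ii) is obtained by the same monotone-limit and integration argument. Your explicit unpacking of the variational characterization of the projection and the remark on the a.e.\ validity of the identity (since $\dot x$ is only absolutely continuous) are just slightly more detailed versions of what the paper states.
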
 
\begin{proof}
(i) According to the projection theorem, we have  
$$
\left\langle \nabla f_i(x(t)) + \ddot{x}(t) + \frac{\alpha}{t} \dot{x}(t), \dot{x}(t) \right\rangle \le 0.
$$
Therefore, we have for any $ i = 1, \cdots, m $,  
\begin{equation} \label{eq:energy-function-pro1}
\begin{aligned}
\frac{d}{dt} \mathcal{W}_i(t) + \frac{\alpha}{t} \|\dot{x}(t)\|^2 &= \langle \nabla f_i(x(t)), \dot{x}(t) \rangle + \langle \ddot{x}(t), \dot{x}(t) \rangle + \left\langle \frac{\alpha}{t} \dot{x}(t), \dot{x}(t) \right\rangle \\
&= \left\langle \nabla f_i(x(t)) + \ddot{x}(t) + \frac{\alpha}{t} \dot{x}(t), \dot{x}(t) \right\rangle \le 0.
\end{aligned}
\end{equation}
Hence, $ \mathcal{W}_i(t) $ is monotonically non-increasing.  

(ii) Since $ f_i $ is bounded, $ \mathcal{W}_i(t) $ is bounded. Consequently, $ \mathcal{W}_i(t) \to \inf_{t \ge 1} \mathcal{W}_i(t) := \mathcal{W}_i^* \in \mathbb{R} $ as $ t \to +\infty $. Integrating \cref{eq:energy-function-pro1} over $ [1, r] $ for any $ r > t $, and rearranging terms, we obtain  
$$
\int_{1}^{r} \frac{\alpha}{t} \|\dot{x}(t)\|^2 \, dt \le \mathcal{W}_i(1) - \mathcal{W}_i(r) \le \mathcal{W}_i(1) - \mathcal{W}_i^*.
$$
Letting $ r \to +\infty $, the conclusion follows.
\end{proof}
\begin{corollary}\label{coro:Dynamical-system-corollary1}
Suppose \cref{assum:Lj-muj} holds. Let $ x: [1, +\infty) \to \mathbb{R}^n $ be a solution to the problem \cref{eq:CP}. Then, for $i=1,\cdots,m$,
$$f_i(x(t))\le f_i(x_0),$$ for $t\ge 1$. 
\end{corollary}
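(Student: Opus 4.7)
The plan is to observe that this corollary is an immediate consequence of part (i) of \cref{lem:Dynamical-system-lemma1}, which states that each energy function $\mathcal{W}_i(t) = f_i(x(t)) + \tfrac{1}{2}\|\dot{x}(t)\|^2$ is monotonically non-increasing on $[1,+\infty)$. So the proof is essentially one step: evaluate $\mathcal{W}_i$ at the initial time $t=1$ and compare with $\mathcal{W}_i(t)$ for arbitrary $t \ge 1$.

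Concretely, I would first compute $\mathcal{W}_i(1)$ using the initial data from \cref{eq:CP}. Since $x(1)=x_0$ and $\dot{x}(1)=0$, we get
\[
\mathcal{W}_i(1) = f_i(x_0) + \tfrac{1}{2}\|0\|^2 = f_i(x_0).
\]
Next, for any $t \ge 1$, the monotone non-increase property from \cref{lem:Dynamical-system-lemma1}(i) yields $\mathcal{W}_i(t) \le \mathcal{W}_i(1) = f_i(x_0)$, i.e.\
\[
f_i(x(t)) + \tfrac{1}{2}\|\dot{x}(t)\|^2 \le f_i(x_0).
\]
Finally, discarding the nonnegative kinetic term $\tfrac{1}{2}\|\dot{x}(t)\|^2 \ge 0$ on the left-hand side gives the desired bound $f_i(x(t)) \le f_i(x_0)$ for every $i=1,\dots,m$ and every $t \ge 1$.

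There is no real obstacle here; the only thing to be careful about is the proper use of the initial condition $\dot{x}(1)=0$, which is precisely what makes $\mathcal{W}_i(1)$ collapse to $f_i(x_0)$ rather than leaving a residual kinetic term. The content of the corollary is exactly that the trajectory remains in the level set $\mathcal{L}(F,F(x_0))$, which will later be combined with \cref{assum:levelset} to guarantee boundedness of $x(t)$.
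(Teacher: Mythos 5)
Your proof is correct and is essentially identical to the paper's own argument: both drop the nonnegative kinetic term, invoke the monotonicity of $\mathcal{W}_i$ from \cref{lem:Dynamical-system-lemma1}(i), and use the initial conditions $x(1)=x_0$, $\dot{x}(1)=0$ to identify $\mathcal{W}_i(1)=f_i(x_0)$. Nothing to add.
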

\begin{proof}
By \cref{lem:Dynamical-system-lemma1}, we have
$$f_i(x(t))\le \mathcal W_i(t)\le \mathcal W_i(1)=f_i(x_0),$$
for $t\ge 1$. 
\end{proof}
\begin{remark}
\cref{coro:Dynamical-system-corollary1} shows that $x(t) \subseteq \mathcal{L}(F, F(x_0))$ for $t \ge 1$, so when \cref{assum:levelset} holds, $x: [1, +\infty) \to \mathbb{R}^n$ is a bounded function.
\end{remark}
\subsection{Convergence rate}
Let $x: [1, +\infty) \to \mathbb{R}^n$ be a solution of the CP, and define
\begin{equation}\label{eq:Theta_z} 
\Theta_z:[1,+\infty )\to \mathbb{R},~~ t\mapsto  \min_{i=1,\cdots,m} \left( f_i(x(t)) - f_i(z) \right), 
\end{equation}
and the auxiliary function  
\begin{equation}\label{eq:aux-function} 
\begin{aligned}
&\mathcal E_z:[1,+\infty )\to \mathbb{R},\\&t\mapsto  t^{\frac{2\alpha}{3}} \Theta_z(t) + \frac{1}{2} t^{\frac{2\alpha}{3} - 2} \left\| t \dot{x}(t) + \frac{2\alpha}{3} (x(t) - z) \right\|^2 + \frac{r(3 - \alpha)}{9} t^{\frac{2\alpha}{3} - 2} \|x(t) - z\|^2,
\end{aligned}
\end{equation} 
for any $z\in \mathbb{R}^n$. Since $\dot{x}(t)$ is absolutely continuous, a straightforward calculation shows that for almost all $t \in [1, +\infty)$,  
\begin{equation}\label{eq:aux-function-p1}
\begin{aligned}
&~~~~~ t^{3 - \frac{2\alpha}{3}} \frac{d}{dt} \left[ \frac{1}{2} t^{\frac{2\alpha}{3} - 2} \left\| t \dot{x}(t) + \frac{2\alpha}{3} (x(t) - z) \right\|^2 \right] \\
&= \left( \frac{\alpha}{3} - 1 \right) \cdot \left( \frac{2\alpha}{3} \|x(t) - z\|^2 + \left\langle t \dot{x}(t), \frac{2\alpha}{3} (x(t) - z) \right\rangle \right) \\
&\quad + t^2 \left\langle t \dot{x}(t) + \frac{2\alpha}{3} (x(t) - z), \frac{\alpha}{t} \dot{x}(t) + \ddot{x}(t) \right\rangle,
\end{aligned}
\end{equation}
and 
\begin{equation}\label{eq:aux-function-p2}
\begin{aligned}
& ~~~~~t^{3 - \frac{2\alpha}{3}} \frac{d}{dt} \left[ \frac{\alpha(3 - \alpha)}{9} t^{\frac{2\alpha}{3} - 2} \|x(t) - z\|^2 \right] \\
&= \left( 1 - \frac{\alpha}{3} \right) \cdot \left( \frac{2\alpha}{3} \|x(t) - z\|^2 + \left\langle t \dot{x}(t), \frac{2\alpha}{3} (x(t) - z) \right\rangle \right) \\
&\quad + \left( 1 - \frac{\alpha}{3} \right) \cdot \left( \frac{\alpha }{3} - 2 \right) \cdot \frac{2\alpha }{3} \|x(t) - z\|^2.
\end{aligned}
\end{equation}
\begin{lemma}\label{lem:Dynamical-system-lemma2}
Suppose \cref{assum:Lj-muj} holds. Let $ x: [1, +\infty) \to \mathbb{R}^n $ be a solution to the problem \cref{eq:CP} with $\alpha \in(0,3]$, and let $ \mathcal{E}_z(t) $ be defined as in \cref{eq:aux-function}. Then, $\mathcal{E}_z(t)$ is  differentiable for almost all  $t\in [1, +\infty)$, and it is monotonically non-increasing.
\end{lemma}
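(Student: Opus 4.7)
The plan is to compute $\mathcal{E}_z'(t)$ directly and show it is nonpositive a.e. The argument rests on two ingredients from the dynamics, both essentially already available: the projection characterization of $\proj_{C(x)+\ddot x}(0)$ used in \cref{lem:Dynamical-system-lemma1}, and convexity of each $f_i$. A fortuitous algebraic cancellation built into the definition of $\mathcal{E}_z$ makes the scalar-style Lyapunov argument go through in the multiobjective setting. Differentiability comes first: since each $f_i\circ x$ is $C^1$, $\Theta_z$ is a pointwise minimum of finitely many $C^1$ functions, hence locally Lipschitz and differentiable outside a negligible set of switching times, with $\Theta_z'(t)=\langle\nabla f_{i^*(t)}(x(t)),\dot x(t)\rangle$ for any index $i^*(t)$ active at $t$; combined with $\ddot x$ existing a.e., $\mathcal{E}_z$ is differentiable a.e.

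Next I would assemble $\mathcal{E}_z'(t)$ using the supplied identities \cref{eq:aux-function-p1,eq:aux-function-p2}. The key algebraic observation is that the block $[(2\alpha/3)\|x-z\|^2 + \langle t\dot x,(2\alpha/3)(x-z)\rangle]$ appears with coefficient $(\alpha/3-1)$ in \cref{eq:aux-function-p1} and with coefficient $(1-\alpha/3)$ in \cref{eq:aux-function-p2}, so these contributions cancel exactly. After accounting for $\frac{d}{dt}[t^{2\alpha/3}\Theta_z] = \tfrac{2\alpha}{3}t^{2\alpha/3-1}\Theta_z + t^{2\alpha/3}\langle\nabla f_{i^*}(x),\dot x\rangle$, this leaves
$$\mathcal{E}_z'(t) = \tfrac{2\alpha}{3}t^{\frac{2\alpha}{3}-1}\Theta_z(t) + t^{\frac{2\alpha}{3}}\langle\nabla f_{i^*}(x),\dot x\rangle + t^{\frac{2\alpha}{3}-1}\Bigl\langle t\dot x + \tfrac{2\alpha}{3}(x-z),\,\tfrac{\alpha}{t}\dot x + \ddot x\Bigr\rangle + \bigl(1-\tfrac{\alpha}{3}\bigr)\bigl(\tfrac{\alpha}{3}-2\bigr)\tfrac{2\alpha}{3}\,t^{\frac{2\alpha}{3}-3}\|x-z\|^2.$$

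Then I would invoke the two bounds from the dynamics. The projection inequality with test element $\nabla f_{i^*}(x)+\ddot x\in C(x)+\ddot x$ gives $\langle\nabla f_{i^*}(x)+\ddot x+\tfrac{\alpha}{t}\dot x,\dot x\rangle\le 0$, and hence $t\langle\dot x,\tfrac{\alpha}{t}\dot x+\ddot x\rangle\le -t\langle\nabla f_{i^*}(x),\dot x\rangle$. Writing $\tfrac{\alpha}{t}\dot x+\ddot x = -\sum_i\theta_i\nabla f_i(x)$ with $\theta\in\Delta^m$, convexity yields $\langle x-z,\tfrac{\alpha}{t}\dot x+\ddot x\rangle \le -\sum_i\theta_i(f_i(x)-f_i(z))\le -\Theta_z(t)$. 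Substituting both bounds into the display, the $\langle\nabla f_{i^*},\dot x\rangle$ terms and the $\Theta_z$ terms cancel, leaving
$$\mathcal{E}_z'(t)\le \bigl(1-\tfrac{\alpha}{3}\bigr)\bigl(\tfrac{\alpha}{3}-2\bigr)\tfrac{2\alpha}{3}\,t^{\frac{2\alpha}{3}-3}\|x(t)-z\|^2,$$
whose prefactor is nonpositive for $\alpha\in(0,3]$ (zero exactly at $\alpha=3$), proving monotonicity.

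The main obstacle I anticipate is the multiobjective subtlety in $\Theta_z$: the active index $i^*(t)$ may switch with $t$, so $\Theta_z'$ is only defined off a switching set and a priori depends on which active branch is picked. What rescues the argument is that the projection inequality holds for every $\eta\in C(x)$, in particular for each $\nabla f_i(x)$, and convexity holds for every $i$; the bounds used in the previous paragraph are therefore valid regardless of which $i^*$ realizes the minimum at a differentiability point. This is precisely the mechanism by which the multiobjective Lyapunov computation reduces to the scalar-style cancellation familiar from \cref{eq:AVD}.
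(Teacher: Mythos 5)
Your proposal is correct and follows essentially the same route as the paper: absolute continuity/local Lipschitzness of $\Theta_z$ for a.e.\ differentiability, the cancellation between \cref{eq:aux-function-p1} and \cref{eq:aux-function-p2}, the projection inequality tested with $\nabla f_{i^*}(x)+\ddot x$, and convexity applied to $\frac{\alpha}{t}\dot x+\ddot x=-\sum_i\lambda_i\nabla f_i(x)$ to absorb the $\Theta_z$ term. The only (harmless) difference is that the paper justifies $\Theta_z'(t)=\langle\nabla f_{i_0}(x(t)),\dot x(t)\rangle$ by citing Lemma 4.12 of the Sonntag--Peitz reference, whereas you argue it directly from the structure of a pointwise minimum of $C^1$ functions.
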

\begin{proof}
For any closed interval $[T_1, T_2] \subseteq [1, +\infty)$, and for any two points $t_1, t_2 \in [T_1, T_2]$, we have  
$$
\begin{aligned}
|\Theta_z(t_1) - \Theta_z(t_2)| &\le \max_{i=1,\cdots,m} |f_i(x(t_1)) - f_i(x(t_2))| \\
&\le \max_{i=1,\cdots,m} \max_{\xi \in x([T_1, T_2])} \|\nabla f_i(\xi)\| \cdot |x(t_1) - x(t_2)|,
\end{aligned}
$$  
so $\Theta_z(t)$ is absolutely continuous on $[T_1, T_2]$, and thus $\mathcal{E}_z(t)$ is almost everywhere differentiable on $[1, +\infty)$. According to Lemma 4.12 in \cite{sonntag2024fastSIAMOptimization}, for almost all $t \in [1, +\infty)$, there exists $i_0 \in \{1, \cdots, m\}$ such that  
$$
\frac{d}{dt} \Theta_z(t) = \frac{d}{dt} \left( f_{i_0}(x(t)) - f_{i_0}(z) \right) = \langle \nabla f_{i_0}(x(t)), \dot{x}(t) \rangle,
$$  
and $\Theta_z(t) = f_{i_0}(x(t)) - f_{i_0}(z)$. Therefore,  
\begin{equation}
\label{eq:aux-function-p3}
t^{3 - \frac{2\alpha}{3}} \frac{d}{dt} \left[ t^{\frac{2\alpha}{3}} \Theta_z(t) \right] = \frac{2\alpha}{3} t^2 \Theta_z(t) + t^2 \langle t \dot{x}(t), \nabla f_{i_0}(x(t)) \rangle.
\end{equation}  
Combining \cref{eq:aux-function-p1,eq:aux-function-p2,eq:aux-function-p3}, for almost all $t \in [1, +\infty)$, we have  
\begin{equation}\label{eq:aux-function-inequal1}
\begin{aligned}
&~~~~~ t^{3 - \frac{2\alpha}{3}} \frac{d}{dt} \mathcal{E}_z(t) \\
&= \frac{2\alpha}{3} t^2 \Theta_z(t) + t^2 \langle t \dot{x}(t), \nabla f_{i_0}(x(t)) \rangle + t^2 \left\langle t \dot{x}(t) + \frac{2\alpha}{3} (x(t) - z), \frac{\alpha}{t} \dot{x}(t) + \ddot{x}(t) \right\rangle \\
&\quad + \left(1 - \frac{\alpha}{3} \right) \cdot \left( \frac{\alpha}{3} - 2 \right) \cdot \frac{2\alpha}{3} \|x(t) - z\|^2 \\
&= \frac{2\alpha}{3} t^2 \Theta_z(t) + t^2 \left\langle t \dot{x}(t), \nabla f_{i_0}(x(t)) + \frac{\alpha}{t} \dot{x}(t) + \ddot{x}(t) \right\rangle \\&\quad + \frac{2\alpha}{3} t^2 \left\langle x(t) - z, -\proj_{C(x(t))}(-\ddot{x}(t)) \right\rangle + \left(1 - \frac{\alpha}{3} \right) \left( \frac{\alpha}{3} - 2 \right) \frac{2\alpha}{3} \|x(t) - z\|^2,
\end{aligned}
\end{equation} 
where the second equality follows from the form of the equation in \cref{eq:CP}. Furthermore, since there exists $(\lambda_1(t), \cdots, \lambda_m(t)) \in \Delta^m$ such that  
\begin{equation*}
\proj_{C(x(t))}(-\ddot{x}(t)) = \sum_{i=1}^m \lambda_i(t) \nabla f_i(x(t)),
\end{equation*}
we have  
\begin{equation}\label{eq:aux-function-inequal2}
\begin{aligned} 
\left\langle x(t) - z, -\proj_{C(x(t))}(-\ddot{x}(t)) \right\rangle &= \left\langle x(t) - z, -\sum_{i=1}^m \lambda_i \nabla f_i(x(t)) \right\rangle \\
&\le -\min_{i=1,\cdots,m} (f_i(x(t)) - f_i(z)) = -\Theta_z(t).
\end{aligned}
\end{equation} 
Combining \cref{eq:aux-function-inequal1,eq:aux-function-inequal2}, we obtain for all $t\ge 1$,  
$$
\frac{d}{dt} \mathcal{E}_z(t) \le 0.
$$  
Thus, for any $t_1 \le t_2$,  
$$
\mathcal{E}_z(t_2) - \mathcal{E}_z(t_1) = \int_{t_1}^{t_2} \frac{d}{dt} \mathcal{E}_z(t) \, dt \le 0.
$$  
Therefore, $\mathcal{E}_z(t)$ is monotonically decreasing on $[1, +\infty)$.
\end{proof}
 The following theorem establishes the sublinear convergence rate of the merit function $u_0(x(t))$ along the trajectory, which is a key result for the continuous-time case.
\begin{theorem}\label{thm:Dynamical-system-convergence-rate}
Suppose \cref{assum:Lj-muj,assum:levelset} hold. Let $ x: [1, +\infty) \to \mathbb{R}^n $ be a solution to the problem \cref{eq:CP} with $\alpha \in(0,3]$. Then, $u_0(x(t))=O(1/t^{\frac{2\alpha }{3}})$.
\end{theorem}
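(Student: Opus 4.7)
The plan is to turn the monotonicity of the Lyapunov-type function \(\mathcal{E}_z\) obtained in \cref{lem:Dynamical-system-lemma2} into a rate on \(u_0(x(t))\), with careful bookkeeping on the supremum that defines \(u_0\). The key structural observation is that for \(\alpha\in(0,3]\) the coefficient \(\alpha(3-\alpha)/9\) in the third term of \(\mathcal{E}_z(t)\) is non-negative; together with the obviously non-negative ``kinetic'' term, both squared-norm pieces in \(\mathcal{E}_z(t)\) can be discarded from an upper estimate, leaving \(t^{2\alpha/3}\Theta_z(t)\) on the left.

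First I would apply \cref{lem:Dynamical-system-lemma2} to conclude \(\mathcal{E}_z(t)\le \mathcal{E}_z(1)\) for every \(z\in\mathbb{R}^n\) and every \(t\ge 1\), then drop the two non-negative squared-norm terms to get the pointwise-in-\(z\) inequality
\[
t^{2\alpha/3}\,\Theta_z(t)\;\le\;\mathcal{E}_z(1).
\]
Plugging in the initial data \(x(1)=x_0\), \(\dot x(1)=0\) writes \(\mathcal{E}_z(1)\) explicitly as \(\Theta_z(1) + \tfrac{\alpha(\alpha+3)}{9}\|x_0-z\|^2\); in particular \(z\mapsto \mathcal{E}_z(1)\) is continuous.

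Next I would pass to the supremum over \(z\) while keeping the right-hand side finite. The crucial trick is that one need not take the supremum over all of \(\mathbb{R}^n\): whenever \(F(z)\not\le F(x(t))\) one has \(\Theta_z(t)<0\) while \(u_0(x(t))\ge \Theta_{x(t)}(t)=0\) by \cref{thm:weakpareto}, so the supremum in \(u_0(x(t))\) is achieved on \(\mathcal{L}(F,F(x(t)))\). By \cref{coro:Dynamical-system-corollary1} the trajectory stays inside \(\mathcal{L}(F,F(x_0))\), hence \(\mathcal{L}(F,F(x(t)))\subseteq \mathcal{L}(F,F(x_0))\); under \cref{assum:levelset} (and continuity of \(F\)) the latter is compact. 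Taking the supremum of the pointwise bound over this fixed compact set yields
\[
u_0(x(t))\;\le\;\frac{1}{t^{2\alpha/3}}\,\sup_{z\in \mathcal{L}(F,F(x_0))}\mathcal{E}_z(1)\;=\;O\!\left(t^{-2\alpha/3}\right),
\]
with the supremum finite by continuity on a compact set.

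The only non-routine step is this reduction of the \(z\)-supremum to the bounded level set; without it, the quadratic \(\|x_0-z\|^2\) appearing in \(\mathcal{E}_z(1)\) would make the uniform bound diverge, and this is precisely where \cref{assum:levelset} is used. Everything else is algebraic rearrangement and direct invocation of results already established in this section.
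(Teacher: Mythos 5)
Your proposal is correct and follows essentially the same route as the paper: apply the monotonicity of $\mathcal{E}_z$ from \cref{lem:Dynamical-system-lemma2}, drop the two non-negative squared-norm terms (using $\alpha\le 3$ for the third), and take the supremum over $z$ restricted to the bounded level set $\mathcal{L}(F,F(x_0))$. In fact you are more explicit than the paper about the one non-routine step—justifying that the supremum defining $u_0(x(t))$ may be restricted to this level set via \cref{coro:Dynamical-system-corollary1}—which the paper's proof leaves implicit.
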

\begin{proof}
According to \cref{lem:Dynamical-system-lemma2}, we have  
$$
t^{\frac{2\alpha}{3}} \Theta_z(t) \le \mathcal{E}_z(t) \le \mathcal{E}_z(1) \le u_0(x_0) + \frac{\alpha(\alpha+3)}{9} \|x(t) - z\|^2.
$$
Taking the supremum over $z$ in $\mathcal{L}(F, F(x_0))$ on both sides, we obtain  
$$
t^{\frac{2\alpha}{3}} u_0(x(t)) \le u_0(x_0) + \frac{2\alpha(\alpha + 3)}{9} R_{x_0}^2.
$$
We complete the proof. 
\end{proof}
\subsection{Point convergence with $\alpha =3$}
\begin{lemma}\label{lem:Dynamical-system-point-convergence-1}
    Suppose \cref{assum:Lj-muj,assum:levelset} hold. Let $ x: [1, +\infty) \to \mathbb{R}^n $ be a solution to the problem \cref{eq:CP} with $\alpha \in(0,3]$. Suppose $f_i$ is bounded below for $i=1,\cdots,m$, then, 
    \begin{itemize}
        \item[(i)] $\lim_{t\to \infty }f_i(x(t))=f_i^\infty$ exists;
        \item[(ii)]  Let $z^*$ be any cluster point of $x$. Then for any $k \ge 1$,  
        $$-\Theta_{z^*}(t) \le \frac{1}{2} \|\dot x(t)\|^2.$$
    \end{itemize}
\end{lemma}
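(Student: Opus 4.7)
Proof proposal. The plan is to first use the integrability $\int_1^\infty \|\dot{x}(t)\|^2/t\,dt<\infty$ from \cref{lem:Dynamical-system-lemma1} together with the rate $u_0(x(t)) = O(1/t^{2\alpha/3})$ from \cref{thm:Dynamical-system-convergence-rate} to pin down a distinguished cluster point of $x(t)$ at which each $f_i$ attains the value $\mathcal{W}_i^*$. I will then show by a weak Pareto argument that \emph{every} cluster point shares these objective values, which gives (i), and deduce (ii) from the monotonicity of $\mathcal{W}_i$.

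For part (i), the first step is to note that, since $\|\dot{x}(t)\|^2\ge 0$ and $\int_1^\infty \|\dot{x}(t)\|^2/t\,dt<\infty$, there must exist a sequence $t_n\to\infty$ with $\|\dot{x}(t_n)\|\to 0$ (otherwise $1/t$ would be integrated against something uniformly bounded away from $0$, contradicting finiteness). By \cref{coro:Dynamical-system-corollary1} and \cref{assum:levelset}, $x(t)$ is bounded, so after passing to a further subsequence I may assume $x(t_n)\to z^{\star}$ for some $z^{\star}\in\mathbb{R}^n$. Combining $\mathcal{W}_i(t_n)\to\mathcal{W}_i^*$ with continuity of $f_i$ and $\|\dot{x}(t_n)\|\to 0$ yields $f_i(z^{\star}) = \mathcal{W}_i^*$ for every $i$. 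Moreover \cref{thm:Dynamical-system-convergence-rate} gives $u_0(x(t_n))\to 0$, and the lower semicontinuity in \cref{thm:weakpareto}(iii) together with non-negativity of $u_0$ forces $u_0(z^{\star})=0$, i.e., $z^{\star}$ is weakly Pareto optimal.

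Next I will show $f_i(z)=\mathcal{W}_i^*$ for every cluster point $z$ of $x$. If $x(s_k)\to z$ along some $s_k\to\infty$, then $\mathcal{W}_i(s_k)\to\mathcal{W}_i^*$ while $f_i(x(s_k))\to f_i(z)$, so $\frac{1}{2}\|\dot{x}(s_k)\|^2$ converges to the quantity $c(z):=\mathcal{W}_i^*-f_i(z)\ge 0$, which must therefore be independent of $i$. If $c(z)>0$, then $f_i(z)<f_i(z^{\star})$ for every $i$, contradicting weak Pareto optimality of $z^{\star}$. Hence $c(z)=0$ and $f_i(z)=\mathcal{W}_i^*=f_i(z^{\star})$ for all cluster points $z$. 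Since $x(t)$ is bounded and every cluster point has the same value $\mathcal{W}_i^*$ under each $f_i$, a standard subsequence argument applied to $\limsup f_i(x(t))$ and $\liminf f_i(x(t))$ yields $\lim_{t\to\infty} f_i(x(t))=\mathcal{W}_i^*=:f_i^\infty$.

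For part (ii), the inequality is now essentially immediate: for any cluster point $z^*$, part (i) yields $f_i(z^*)=f_i^\infty=\mathcal{W}_i^*$. Monotonicity of $\mathcal{W}_i$ in \cref{lem:Dynamical-system-lemma1}(i) gives $\mathcal{W}_i(t)\ge\mathcal{W}_i^*=f_i(z^*)$, which rearranges to $f_i(z^*)-f_i(x(t))\le\frac{1}{2}\|\dot{x}(t)\|^2$ for every $i$ and every $t\ge 1$; taking the maximum over $i$ produces $-\Theta_{z^*}(t)=\max_i(f_i(z^*)-f_i(x(t)))\le\frac{1}{2}\|\dot{x}(t)\|^2$. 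The main obstacle in the whole argument is locating the distinguished cluster point $z^{\star}$ via the integrability of $\|\dot{x}\|^2/t$ and then leveraging its weak Pareto optimality to equalize the objective values across all cluster points; once this is secured, both (i) and the bound in (ii) follow from the structure of $\mathcal{W}_i$.
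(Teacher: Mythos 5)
Your proof is correct, but it reaches the conclusion by a genuinely different mechanism than the paper. The paper's proof takes the supremum over $z$ of the non-increasing quantity $\Theta_z(t)+\tfrac{1}{2}\|\dot x(t)\|^2$ to conclude that $u_0(x(t))+\tfrac{1}{2}\|\dot x(t)\|^2$ is non-increasing, hence convergent; since $u_0(x(t))\to 0$ by \cref{thm:Dynamical-system-convergence-rate}, the full limit $\lim_{t\to\infty}\|\dot x(t)\|^2$ exists, and the integrability in \cref{lem:Dynamical-system-lemma1}~(ii) forces it to be $0$, so $f_i(x(t))=\mathcal W_i(t)-\tfrac{1}{2}\|\dot x(t)\|^2\to\mathcal W_i^*$ directly. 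You instead extract from the integrability only a subsequence with $\|\dot x(t_n)\|\to 0$, identify along it a weakly Pareto optimal cluster point $z^\star$ with $f_i(z^\star)=\mathcal W_i^*$, and then rule out $f_i(z)<f_i(z^\star)$ for all $i$ at any other cluster point by Pareto dominance, finishing with a $\limsup$/$\liminf$ argument over the bounded trajectory. Both routes are sound. The paper's route is shorter and yields the stronger intermediate fact $\lim_{t\to\infty}\|\dot x(t)\|=0$ for free (your argument recovers it only a posteriori, from $f_i(x(t))\to\mathcal W_i^*$ together with $\mathcal W_i(t)\to\mathcal W_i^*$); your route avoids the sup-over-$z$ monotonicity trick but pays for it by invoking the lower semicontinuity of $u_0$ and the weak-Pareto characterization of \cref{thm:weakpareto} already at this stage, which the paper defers to the final convergence theorem. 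Your part (ii) is essentially the paper's argument, written with a maximum of $f_i(z^*)-f_i(x(t))$ rather than a minimum of $f_i(x(t))-f_i(z^*)$.
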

\begin{proof}
(i) Since $\mathcal W_i(t)$ is monotonically non-increasing, for any $t \ge s\ge 1$ and any $z \in \mathbb{R}^n$, we have  
$$
\mathcal W_i(t) - f_i(z) \le \mathcal W_i(s) - f_i(z).
$$  
From the definition of $\mathcal W_i(t)$, it follows that  
$$
\Theta_z(t) + \frac{1}{2}\|\dot x(t)\|^2 \le \Theta_z(s) + \frac{1}{2}\|\dot x(s)\|^2.
$$  
Taking the supremum over $z \in \mathbb{R}^n$ on both sides, we obtain  
$$
\mathcal W(t) := u_0(x(t)) + \frac{1}{2}\|\dot x(t)\|^2 \le u_0(x(s)) + \frac{1}{2}\|\dot x(s)\|^2.
$$  
Thus, $\lim_{t \to \infty} \mathcal W(t) = \mathcal W^*$ exists. Since $\lim_{t \to \infty} u_0(x(t)) = 0$ by \cref{thm:Dynamical-system-convergence-rate}, it follows that $\lim_{t \to \infty} \|\dot x(t)\|^2$ exists. According to \cref{lem:Dynamical-system-lemma1}, we further obtain  
$$
\lim_{t \to \infty} \|\dot x(t)\|^2 = 0.
$$  
Therefore,  
$$
\lim_{t \to \infty} f_i(x(t)) = \lim_{t \to \infty} \mathcal W_i(t) - \frac{1}{2}\|\dot x(t)\|^2 = \mathcal W_i^* := f_i^\infty \in \mathbb{R},
$$  
exists.  

(ii) From (i), for any $i \in \{1, \cdots, m\}$, we have  
$$
f_i(z) = f_i^\infty \le f_i(x(t)) + \frac{1}{2}\|\dot x(t)\|^2.
$$  
Rearranging the terms and taking the minimum over $i$ on both sides gets the result.
\end{proof}

\begin{lemma}\label{lem:Dynamical-system-lemma3} 
Suppose \cref{assum:Lj-muj,assum:levelset} hold. Let $ x: [1, +\infty) \to \mathbb{R}^n $ be a solution to the problem \cref{eq:CP} with $\alpha \in(0,3]$. Suppose $f_i$ is bounded below for $i=1,\cdots,m$. Let $z^*$ be any cluster point of $x$, then, 
$$
\lim_{t\to \infty }\mathcal E_{z^*}(t) =\mathcal E^*,
$$  
exists.
\end{lemma}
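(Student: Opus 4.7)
The plan is to show that $\mathcal{E}_{z^*}(t)$ is bounded below and then combine this with the monotonicity already established in \cref{lem:Dynamical-system-lemma2} to conclude that the limit is finite. Monotonicity alone only gives $\lim_{t\to\infty}\mathcal{E}_{z^*}(t)\in[-\infty,\mathcal{E}_{z^*}(1)]$, so the real content is ruling out the value $-\infty$.

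The first step is an algebraic rearrangement of $\mathcal{E}_{z^*}$ that exposes an exact time-derivative. Setting $h(t):=\|x(t)-z^*\|^2$ so that $2\langle\dot x(t),x(t)-z^*\rangle=h'(t)$, I would expand the perfect square in \cref{eq:aux-function} and use the product-rule identity
\[
t^{\frac{2\alpha}{3}-1}h'(t)=\frac{d}{dt}\!\left(t^{\frac{2\alpha}{3}-1}h(t)\right)-\left(\tfrac{2\alpha}{3}-1\right)t^{\frac{2\alpha}{3}-2}h(t).
\]
After collecting terms this should yield the identity
\[
\mathcal{E}_{z^*}(t)=t^{\frac{2\alpha}{3}}\!\left[\Theta_{z^*}(t)+\tfrac{1}{2}\|\dot x(t)\|^2\right]+\frac{\alpha}{3}\frac{d}{dt}\!\left(t^{\frac{2\alpha}{3}-1}h(t)\right)+\frac{\alpha(6-\alpha)}{9}\,t^{\frac{2\alpha}{3}-2}h(t).
\]
Because $z^*$ is a cluster point of $x$, \cref{lem:Dynamical-system-point-convergence-1}(ii) forces the first bracket to be non-negative. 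For $\alpha\in(0,3]$ the coefficient $\alpha(6-\alpha)/9$ is strictly positive, so the last term is non-negative as well. Dropping these pieces gives the pointwise bound
\[
\mathcal{E}_{z^*}(t)\ge\frac{\alpha}{3}\frac{d}{dt}\!\left(t^{\frac{2\alpha}{3}-1}\|x(t)-z^*\|^2\right).
\]

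The second step is to integrate this inequality from $1$ to $t$, which telescopes to
\[
\int_1^t\mathcal{E}_{z^*}(u)\,du\ge\frac{\alpha}{3}\!\left(t^{\frac{2\alpha}{3}-1}\|x(t)-z^*\|^2-\|x_0-z^*\|^2\right)\ge-\frac{\alpha}{3}\|x_0-z^*\|^2,
\]
a lower bound on the running integral of $\mathcal{E}_{z^*}$ that is uniform in $t$.

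The third step is a short contradiction. If $\mathcal{E}_{z^*}(t)\to-\infty$, then for any prescribed $M>0$ eventually $\mathcal{E}_{z^*}(u)<-M$, which forces $\int_1^t\mathcal{E}_{z^*}(u)\,du\to-\infty$ and contradicts the uniform lower bound just obtained. Hence $\mathcal{E}_{z^*}(t)$ is bounded below, and together with the monotonicity from \cref{lem:Dynamical-system-lemma2} the limit $\lim_{t\to\infty}\mathcal{E}_{z^*}(t)=\mathcal{E}^*\in\mathbb{R}$ exists.

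The main obstacle I expect is the first step, namely producing a rearrangement in which every non-derivative piece has a definite sign. The numerical coefficients must line up exactly: the $\frac{\alpha(\alpha+3)}{9}$ arising from combining the expansion of the perfect square with the explicit $\frac{\alpha(3-\alpha)}{9}$ factor in $\mathcal{E}_{z^*}$ must cancel against the $\frac{\alpha(2\alpha-3)}{9}$ produced by differentiating $t^{\frac{2\alpha}{3}-1}h(t)$, leaving $\frac{\alpha(6-\alpha)}{9}$, which is positive precisely on $\alpha\in(0,3]$. Any sign error or misalignment of exponents would destroy the argument. The role of $z^*$ as a cluster point is comparatively modest: it enters only through \cref{lem:Dynamical-system-point-convergence-1}(ii), supplying the non-negativity of the first bracket.
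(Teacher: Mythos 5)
Your proof is correct, but it takes a genuinely different route from the paper's. Both arguments reduce the problem to showing that $\mathcal{E}_{z^*}$ is bounded below (the monotone non-increase being supplied by \cref{lem:Dynamical-system-lemma2}), and both invoke \cref{lem:Dynamical-system-point-convergence-1}(ii) for the non-negativity of $\Theta_{z^*}(t)+\tfrac12\|\dot x(t)\|^2$; they differ in how the lower bound is produced. The paper obtains a \emph{pointwise} bound: it substitutes $z=x(t)$ into the monotone decrease $\mathcal{E}_z(t)\le\mathcal{E}_z(1)$ to extract the uniform velocity estimate $\tfrac12 t^{2\alpha/3}\|\dot x(t)\|^2\le C$, and then bounds $t^{2\alpha/3}\Theta_{z^*}(t)\ge-\tfrac12 t^{2\alpha/3}\|\dot x(t)\|^2\ge-C$, the remaining two terms of $\mathcal{E}_{z^*}$ being manifestly non-negative for $\alpha\in(0,3]$. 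You instead rearrange $\mathcal{E}_{z^*}$ to isolate the total derivative $\tfrac{\alpha}{3}\tfrac{d}{dt}\bigl(t^{2\alpha/3-1}\|x(t)-z^*\|^2\bigr)$ plus non-negative remainders (your coefficient bookkeeping $\tfrac{2\alpha^2}{9}+\tfrac{\alpha(3-\alpha)}{9}-\tfrac{\alpha(2\alpha-3)}{9}=\tfrac{\alpha(6-\alpha)}{9}$ checks out), integrate, and rule out $\mathcal{E}_{z^*}(t)\to-\infty$ by contradiction with the uniform lower bound on the running integral; since a monotone non-increasing function is either bounded below or tends to $-\infty$, the dichotomy is clean. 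Your route avoids the $z=x(t)$ substitution trick at the cost of the algebraic identity, while the paper's is shorter and yields the stronger pointwise bound $\mathcal{E}_{z^*}(t)\ge-C$. One cosmetic point: $\alpha(6-\alpha)/9$ is positive on all of $(0,6)$, not ``precisely'' on $(0,3]$, though this does not affect the argument.
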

\begin{proof}
By \cref{lem:Dynamical-system-lemma2}, for any $t \ge s$ and $z\in \mathbb{R}^n$,  
$$
\mathcal E_z(t) - \mathcal E_z(s) \le \int_{s}^t \frac{d}{d\tau}\mathcal E_z(\tau)d\tau \le 0.
$$  
Hence,  
$$
\mathcal E_z(t) \le \mathcal E_z(1) \le u_0(x_0) + \frac{\alpha (\alpha +1)}{6}\|x_0 - z\|^2.
$$  
Setting $z = x(t)$ yields  
$$
\frac{1}{2}t^{\frac{2\alpha }{3}}\|\dot x(t)\|^2 \le  u_0(x_0)+\frac{\alpha (\alpha +1)}{6}\|x_0-x(t)\|^2\le u_0(x_0)+\frac{\alpha(\alpha +1)}{12}R_{x_0}^2:=C,
$$  
and by \cref{lem:Dynamical-system-point-convergence-1}, we have 
$$
\mathcal E_{z^*}(t) \ge -C.
$$  
Therefore, $\mathcal E_z(t)$ is monotonically non-increasing and bounded below, so $\lim_{t\to \infty }\mathcal E_{z^*}(t)$ exists.
\end{proof}
The following lemma is crucial to our main conclusion.
\begin{lemma}[\cite{boct2025fast}, Lemma A.4]\label{lem:continuous-limit} Let $a>0$ and $q:[t_0,+\infty )\to \R^n$ be a continuously differentiable function such that 
    \begin{equation}
    \lim_{t \to \infty}\left(q(t)+\frac{t}{a}\dot q(t)\right) = \ell\in \R.
    \end{equation}
    Then, it holds $\lim_{t\to \infty}q(t) = \ell$. 
\end{lemma}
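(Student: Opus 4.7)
The key observation is that the combination $q(t)+\frac{t}{a}\dot q(t)$ is exactly the derivative of $t^aq(t)$ normalized by $a t^{a-1}$. Indeed,
$$\frac{d}{dt}\bigl(t^a q(t)\bigr)=a t^{a-1}q(t)+t^a\dot q(t)=a t^{a-1}\Bigl(q(t)+\frac{t}{a}\dot q(t)\Bigr).$$
Writing $g(t):=q(t)+\frac{t}{a}\dot q(t)$, integrating this identity from $t_0$ to $t$ and dividing by $t^a$ yields the explicit representation
$$q(t)=\frac{t_0^{a}\,q(t_0)}{t^{a}}+\frac{1}{t^{a}}\int_{t_0}^{t}a s^{a-1}g(s)\,ds.$$
Since $a>0$, the first summand tends to $0$ as $t\to\infty$, so the problem reduces to showing that the weighted average $\frac{1}{t^{a}}\int_{t_0}^{t}a s^{a-1}g(s)\,ds$ inherits the limit $\ell$ of $g$.

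\textbf{Finishing the argument.} This averaging step admits two equivalent treatments. The elementary route is an $\varepsilon$--$T$ argument: given $\varepsilon>0$, pick $T$ so that $\|g(s)-\ell\|<\varepsilon$ for all $s\ge T$, split the integral as $\int_{t_0}^{T}+\int_{T}^{t}$, observe that the first piece is a fixed bounded vector divided by $t^{a}\to\infty$, and bound the second piece using $\int_{T}^{t} a s^{a-1}\,ds=t^{a}-T^{a}\le t^{a}$ together with $\|g(s)-\ell\|<\varepsilon$. The slicker route is to apply the ``one-sided'' form of L'H\^opital's rule to the quotient $q(t)=(t^{a}q(t))/t^{a}$: the denominator diverges to $+\infty$ and the ratio of derivatives equals $g(t)\to\ell$, which is exactly the setting in which the rule delivers $\lim_{t\to\infty}q(t)=\ell$ without requiring any a priori control on the numerator $t^{a}q(t)$.

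\textbf{Main obstacle.} I do not anticipate a genuine obstacle here; the computation is self-contained once the integrating-factor $t^{a}$ is spotted. The only items that deserve care are that the argument is applied coordinatewise to accommodate the $\mathbb{R}^{n}$-valued $q$, and that the hypothesis $a>0$ is used in two essential places: to make $t^{a}\to\infty$ (so that the initial-condition term vanishes and the L'H\^opital variant applies) and to keep $t^{a}$ a smooth positive integrating factor on $[t_0,+\infty)$ with $t_0>0$.
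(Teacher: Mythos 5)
Your proof is correct: the integrating factor $t^{a}$, the resulting representation of $q(t)$ as a weighted average of $g(s)=q(s)+\frac{s}{a}\dot q(s)$ plus a vanishing boundary term, and either the $\varepsilon$--$T$ splitting or the one-sided L'H\^opital rule all go through (applied coordinatewise, with $t_0>0$ as you note). The paper itself gives no proof of this lemma --- it is imported verbatim by citation from \cite{boct2025fast}, Lemma A.4 --- and your argument is precisely the standard one used there, so there is nothing further to compare.
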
 
\begin{theorem}
    Suppose \cref{assum:Lj-muj,assum:levelset} hold. Let $x: [1, +\infty) \to \mathbb{R}^n$ be a solution to \cref{eq:CP} with $\alpha =3$. Then $x(t)$ converges to the set of weakly Pareto optimal solutions of \cref{eq:MOP}.
\end{theorem}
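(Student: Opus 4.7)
Because $x(t)\subseteq\mathcal L(F,F(x_0))$ is bounded by \cref{coro:Dynamical-system-corollary1} and \cref{assum:levelset}, the trajectory has a cluster point $z^*\in\R^n$, with $x(t_k)\to z^*$ for some $t_k\uparrow\infty$. By \cref{lem:Dynamical-system-point-convergence-1} we have $\|\dot x(t)\|\to 0$ and $f_i(x(t))\to f_i^\infty=f_i(z^*)$ for every $i$ (the second equality by continuity); hence $\Theta_{z^*}(t)\to 0$ and $\Theta_{z^*}(\cdot)$ does not depend on the specific cluster point chosen. Once $x(t)\to z^*$ is proved, the rate $u_0(x(t))=O(1/t^2)$ from \cref{thm:Dynamical-system-convergence-rate} together with the nonnegativity and lower semicontinuity of $u_0$ in \cref{thm:weakpareto} forces $u_0(z^*)=0$, i.e.\ $z^*$ is weakly Pareto optimal.

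\textbf{Reduction via \cref{lem:continuous-limit}.} The plan is to apply \cref{lem:continuous-limit} with $q=x$ and $a=2$: if the vector $w(t):=x(t)+\tfrac{t}{2}\dot x(t)$ converges to some $w^\infty\in\R^n$, then $\lim_{t\to\infty}x(t)=w^\infty$; combined with $x(t_k)\to z^*$ this forces $w^\infty=z^*$. So everything reduces to establishing the vector limit of $w(t)$. Specialized to $\alpha=3$, \cref{lem:Dynamical-system-lemma3} reads
\begin{equation*}
\mathcal{E}_{z^*}(t)\;=\;t^2\Theta_{z^*}(t)+2\|w(t)-z^*\|^2\longrightarrow\mathcal{E}^*\in\R.
\end{equation*}
I would then (a) show $\lim_{t\to\infty}t^2\Theta_{z^*}(t)=:T^*$ exists, so that $\lim\|w(t)-z^*\|^2=(\mathcal{E}^*-T^*)/2$, and (b) upgrade this scalar limit to a vector one. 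For (b), the invariance $\Theta_{z_1^\sharp}=\Theta_{z_2^\sharp}$ across any two cluster points of $x$ makes the difference $\mathcal{E}_{z_1^\sharp}(t)-\mathcal{E}_{z_2^\sharp}(t)$ affine in $w(t)$, and so $\lim\langle w(t),z_1^\sharp-z_2^\sharp\rangle$ exists for every chord of the cluster set; together with the scalar limit of $\|w(t)-z^*\|$ and the pointwise bound $t\|\dot x(t)\|=O(1)$ (from the proof of \cref{lem:Dynamical-system-lemma3}), this pins down all cluster points of $w(t)$ to a single vector $w^\infty$.

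\textbf{Main obstacle.} The critical step is (a). In the single-objective analyses of Bo\c{t} et al.\ and Jang--Ryu the analogue $f(x(t))-f(z^*)$ is automatically nonnegative and a direct monotone-anchor argument closes the gap. Here $\Theta_{z^*}$ is only sign-indefinite: one has only the two-sided bound $-\tfrac12\|\dot x(t)\|^2\le\Theta_{z^*}(t)\le u_0(x(t))$ from \cref{lem:Dynamical-system-point-convergence-1}(ii) and \cref{thm:weakpareto}, together with $u_0(x(t))=O(1/t^2)$. My plan is to add to $\mathcal{E}_{z^*}(t)$ a carefully chosen multiple of $\tfrac12\|x(t)-z^*\|^2$, producing an auxiliary functional whose derivative along \cref{eq:CP}---evaluated via the projection inequality $\langle x(t)-z^*,-\proj_{C(x(t))}(-\ddot x(t))\rangle\le-\Theta_{z^*}(t)$ used inside the proof of \cref{lem:Dynamical-system-lemma2}---is dominated by the integrable quantity $t^{-1}\|\dot x(t)\|^2$ from \cref{lem:Dynamical-system-lemma1}(ii). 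Monotonicity modulo this $L^1$ error then delivers the missing scalar limit, closing the argument.
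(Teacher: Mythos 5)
Your proposal assembles the right ingredients (\cref{lem:continuous-limit}, the existence of $\lim_t\mathcal E_{z^*}(t)$ from \cref{lem:Dynamical-system-lemma3}, and an Opial-type evaluation along subsequences), but it applies \cref{lem:continuous-limit} to the wrong object, and this leaves a genuine gap. You reduce everything to the vector convergence of $w(t)=x(t)+\tfrac{t}{2}\dot x(t)$, which forces you to prove (a) that $\lim_{t\to\infty}t^2\Theta_{z^*}(t)$ exists and (b) that the cluster points of $w$ collapse to a single vector. Neither step is carried out. For (a), the two-sided bounds $-\tfrac12 t^2\|\dot x(t)\|^2\le t^2\Theta_{z^*}(t)\le t^2u_0(x(t))$ give only boundedness, and your proposed fix---adding a multiple of $\|x(t)-z^*\|^2$ and controlling the derivative by the integrable quantity $t^{-1}\|\dot x(t)\|^2$---is exactly the step that fails at $\alpha=3$: the cross term $\langle\dot x(t),x(t)-z^*\rangle$ is not dominated by $t^{-1}\|\dot x(t)\|^2$, and the usual route needs $\int t\|\dot x(t)\|^2\,dt<\infty$, which is only available for $\alpha>3$. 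For (b), knowing $\|w(t)-z^*\|$ converges and $\langle w(t),z_1^\sharp-z_2^\sharp\rangle$ converges along chords of the cluster set of $x$ does not pin down $w$: since $t\|\dot x(t)\|$ is only $O(1)$ and need not vanish, cluster points of $w$ need not be cluster points of $x$, and if $x$ happened to have few cluster points the chords span too few directions to exclude oscillation of $w$ on a sphere.

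The paper's proof avoids both difficulties with one cancellation that your write-up passes over. Fix two cluster points $z_1,z_2$ of $x$ and expand $\mathcal E_z(t)=t^2\Theta_z(t)+\tfrac12 t^2\|\dot x(t)\|^2+2h_z(t)+t\dot h_z(t)$ with $h_z(t)=\|x(t)-z\|^2$. Because $f_i(z_1)=f_i(z_2)$ for all $i$ (your own observation), both the $t^2\Theta_z$ term and the kinetic term cancel in the difference, leaving
\begin{equation*}
\bigl(h_{z_1}(t)-h_{z_2}(t)\bigr)+\frac{t}{2}\bigl(\dot h_{z_1}(t)-\dot h_{z_2}(t)\bigr)=\frac{\mathcal E_{z_1}(t)-\mathcal E_{z_2}(t)}{2},
\end{equation*}
whose limit exists by \cref{lem:Dynamical-system-lemma3}. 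Applying \cref{lem:continuous-limit} to the scalar $q=h_{z_1}-h_{z_2}$ with $a=2$ gives $\lim_t\bigl(h_{z_1}(t)-h_{z_2}(t)\bigr)=\ell/2$, and evaluating along the two subsequences yields $-\|z_1-z_2\|^2=\|z_1-z_2\|^2$, hence $z_1=z_2$. No limit of $t^2\Theta_{z^*}(t)$, of $\|x(t)-z^*\|$, or of the vector $w(t)$ is ever needed. Your concluding step (weak Pareto optimality of the limit via \cref{thm:Dynamical-system-convergence-rate} and \cref{thm:weakpareto}) is correct and matches the paper.
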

\begin{proof}
    Since the level set $\mathcal{L}(F,F(x_0))$ is bounded by \cref{assum:levelset}, it follows from  \cref{coro:Dynamical-system-corollary1} that $x$ is a bounded function. Let $z_1, z_2$ be any two cluster points of $x(t)$. Furthermore, according to \cref{lem:Dynamical-system-point-convergence-1} and the continuity of $f_i$, we have
    \begin{equation}\label{eq:fz1=fz2}
    f_i(z_1) = f_i(z_2) = f_i^\infty, \text{ for all } i = 1, \cdots, m. \end{equation} 
    Define  
    $$
    h_z(t) = \|x(t) - z\|^2.
    $$
    Since $x(t)$ is continuously differentiable, $h_z(t)$ is also continuously differentiable. Note the definition of $\mathcal{E}_z(t)$ with $\alpha =3$:  
    $$
    \begin{aligned}
    \mathcal{E}_z(t) &:= t^2 \min_{i=1,\cdots,m} \left( f_i(x(t)) - f_i(z) \right) + \frac{1}{2} t^2 \|\dot{x}(t)\|^2 + 2 \|x(t) - z\|^2 + 2t \langle \dot{x}(t), x(t) - z \rangle \\
    &= t^2 \min_{i=1,\cdots,m} \left( f_i(x(t)) - f_i(z) \right) + \frac{1}{2} t^2 \|\dot{x}(t)\|^2 + 2h_z(t) + t \dot{h}_z(t).
    \end{aligned}
    $$
    Thus, by \cref{eq:fz1=fz2}, we have
    $$
    \begin{aligned}
    \left( h_{z_1}(t) - h_{z_2}(t) \right) + \frac{t}{2} \left( \dot{h}_{z_1}(t) - \dot{h}_{z_2}(t) \right) = \frac{\mathcal{E}_{z_1}(t) - \mathcal{E}_{z_2}(t)}{2}.
    \end{aligned}
    $$
    Since $\lim_{t \to \infty} \left( \mathcal{E}_{z_1}(t) - \mathcal{E}_{z_2}(t) \right) = \mathcal{E}_{z_1}^* - \mathcal{E}_{z_2}^* := \ell \in \mathbb{R}$ by \cref{lem:Dynamical-system-lemma3}. Moreover,  By \cref{lem:continuous-limit}, we have  
    $$
    \lim_{t \to \infty} \left( h_{z_1}(t) - h_{z_2}(t) \right) = \frac{\ell}{2}.
    $$
    Since $z_1$ and $z_2$ are cluster points, there exist sequences $\{t_k\}, \{s_k\}$ such that $t_k, s_k \to \infty$, $x(t_k) \to z_1$, and $x(s_k) \to z_2$ as $k\to \infty $. Therefore,  
    $$
    \begin{aligned}
    \lim_{k \to \infty} \left( h_{z_1}(t_k) - h_{z_2}(t_k) \right) &= \lim_{k \to \infty} \left( \|x(t_k) - z_1\|^2 - \|x(t_k) - z_2\|^2 \right) = -\|z_1 - z_2\|^2, \\
    \lim_{k \to \infty} \left( h_{z_1}(s_k) - h_{z_2}(s_k) \right) &= \lim_{k \to \infty} \left( \|x(s_k) - z_1\|^2 - \|x(s_k) - z_2\|^2 \right) = \|z_1 - z_2\|^2.
    \end{aligned}
    $$
    Hence, $\|z_1 - z_2\|^2 = 0$, i.e., $z_1 = z_2$. This implies that there exists $x^* \in \mathbb{R}^n$ such that  
    $$
    \lim_{t \to \infty} x(t) = x^*.
    $$ 
    Furthermore, according to \cref{thm:weakpareto} (i) and (iii), we have  
    $$  
    0 \le u_0(x^*) \le \liminf_{t \to \infty} u_0(x(t)) = 0.  
    $$  
    Thus, $x^*$ is a weakly Pareto optimal solution of \cref{eq:MOP} by \cref{thm:weakpareto} (ii).
\end{proof}
\begin{remark}
For the \cref{eq:CP} with $0 < \alpha < 3$, it remains unsolved. The difficulty lies in the fact that for two distinct cluster points $z_1$ and $z_2$, the existence of the limit of $\mathcal{E}_{z_1}(t) - \mathcal{E}_{z_2}(t)$ does not guarantee the existence of $\lim_{t\to \infty }\left( h_{z_1}(t) - h_{z_2}(t) \right) + \frac{t}{2} \left( \dot{h}_{z_1}(t) - \dot{h}_{z_2}(t) \right)$.
\end{remark}
\section{Algorithm}\label{sec:algo}
In this section, we propose a multi-objective accelerated gradient algorithm with a generalized momentum parameter selection, a method initially studied by Tanabe et al. \cite{tanabe2022globally}. The specific iterative scheme is as follows:  
\begin{equation}\label{eq:algorithm-iterschme}
\left\{\begin{aligned}  
t_{k+1} &=\sqrt{t_k^2 - a t_k + b} + \frac{1}{2}, \\  
y_k &= x_k + \frac{t_k - 1}{t_{k+1}}(x_k - x_{k-1}), \\  
x_{k+1} &= y_k - s \cdot \proj_{C(y_k)}\left(\frac{1}{s}(y_k-x_k)\right).  
\end{aligned}\right.  \tag{MAG-GM}
\end{equation}
Here, $\{t_k\}$ is a sequence generated iteratively from the initial point $t_1 = 1$ using the above formula, with $a \in [0,1)$ and $b \in \left[\frac{a^2}{4}, \frac{1}{4}\right]$. It is worth noting that when $a = 0$ and $b = \frac{1}{4}$, the inertial parameter selection in this iterative scheme corresponds to that proposed by Tanabe et al. for the accelerated proximal gradient algorithm \cite{tanabe2023accelerated}. When $b = \frac{a^2}{4}$, this momentum parameter selection can be found in single-objective optimization references \cite{Attouch2014,su2016differential} and in multi-objective references \cite{sonntag2024fast,yin2025multiobjective}. In particular, when $a = b = 0$, the iterative scheme reduces to the multi-objective gradient-type algorithm proposed by Sonntag and Peitz \cite{sonntag2024fast}. Based on \cref{eq:algorithm-iterschme}, we propose a multi-objective accelerated algorithmic framework with generalized momentum factors.
\begin{algorithm}[H]
    \caption{Multiobjective accelerated gradient method with generalized momentum factor}\label{algo:Multiobjective}
    \begin{algorithmic}[1]
        \Require Choose $x_0=x_1\in \mathbb{R}^n$, $0<s\leq\frac{1}{L}$, $t_1 = 1$, $a\in[0,1)$ and $b\in[\frac{a^2}{4},\frac14]$. 
        \State Set $k=1$. 
        \While{$k\le k_{\max}$}
        \State Set $t_{k+1} =\sqrt{t_k^2-at_k+b}+\frac12$. 
        \State Set $y_k =x_k +\frac{t_{k}-1}{t_{k+1}}(x_k-x_{k-1})$. 
        \State Compute $\theta^k =(\theta_1^k,\cdots,\theta_m^k)\in \mathbb{R}^m$ by solving
        \begin{equation}
        \min_{\theta \in \Delta^m} \left\|s\left(\sum_{ i = 1}^m\theta_i \nabla f_i(y_k)\right)-(y_k-x_k)\right\|^2. 
        \end{equation}
        \State $x_{k+1}=y_k-s\sum_{i=1}^m\theta^{k}_i\nabla f_i(y_k)$. 
        \If {$\|x_{k+1}-y_k\|<\varepsilon$}
        \State \textbf{break}
        \Else
        \State Update $k\leftarrow k+1$. 
        \EndIf
        \EndWhile
    \end{algorithmic}
\end{algorithm}

When the termination condition given in line 7 of \cref{algo:Multiobjective} is satisfied, we have  
$$
\begin{aligned}  
\|\proj_{C(x_{k+1})}(0)\| &\le \|\proj_{C(x_{k+1})}(0) - \proj_{C(y_k)}(0)\| + \|\proj_{C(y_k)}(0)\| \\  
&\le \rho \|x_{k+1} - y_k\|^{\frac{1}{2}} + \frac{1}{s}\|x_{k+1} - y_k\| \\  
&\le \rho \cdot \varepsilon^{\frac{1}{2}} + \frac{1}{s} \cdot \varepsilon.  
\end{aligned}  
$$  
At this point, within the tolerance range, it can be considered that $\proj_{C(x_{k+1})}(0) = 0$, meaning $x_{k+1}$ is a critical point. In the subsequent content of this paper, the iterative sequence generated by \cref{algo:Multiobjective} is regarded as an infinite sequence generated by the iterative scheme \cref{eq:algorithm-iterschme}.

The following lemma presents some properties of the generalized momentum factor $\{t_k\}$.
\begin{lemma}[\cite{tanabe2022globally}, Lemma 6]\label{lem:tk-properties}
{Let $\{t_{k}\}$  be defined by lines 3 in \cref{algo:Multiobjective} with $a\in[0,1)$ and $b\in[\frac{a^{2}}4,\frac{1}{4}]$. Then, the following inequalities hold for all $k\geq 1$.}
\begin{itemize}
    \item[(i)] $t_{k+1}\geq t_{k}+\frac{1-a}{2}$ and $t_{k}\geq\frac{1-a}{2}k+\frac{1+a}{2}$;
    \item[(ii)] $t_{k+1}\leq t_{k}+\frac{1-a+\sqrt{4b-a^{2}}}{2}$ and $t_{k}\leq\frac{1-a+\sqrt{4b-a^{2}}}{2}(k-1)+1\leq k$;
    \item[(iii)] $t_{k}^{2}-t_{k+1}^{2}+t_{k+1}=at_{k}-b+\frac{1}{4}\geq at_{k}$;
    \item[(iv)] $0\leq\frac{t_k-1}{t_{k+1}}\leq\frac{k-1}{k+1/2}$;
    \item[(v)] $1-\left(\frac{t_k-1}{t_{k+1}}\right)^2\ge \frac{1}{t_k}$. 
\end{itemize}
\end{lemma}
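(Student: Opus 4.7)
The plan is to derive all five statements from the single identity
\[
\bigl(t_{k+1} - \tfrac{1}{2}\bigr)^{2} = t_{k}^{2} - a\, t_{k} + b,
\]
obtained by squaring line~3 of \cref{algo:Multiobjective}, together with the base bound $t_{k}\ge 1$, which follows once and for all by induction from $t_{1}=1$ and the monotonicity proved in~(i).

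I would first dispatch (i), (ii), (iii) as direct algebraic manipulations. For (i), the claim $t_{k+1}-t_{k}\ge (1-a)/2$ rewrites as $\sqrt{t_{k}^{2}-at_{k}+b}\ge t_{k}-a/2$; both sides are nonnegative because $a<1\le t_{k}$, so squaring reduces the inequality to the hypothesis $b\ge a^{2}/4$, and the closed-form lower bound on $t_{k}$ then follows by telescoping $t_{k}-1=\sum_{j=1}^{k-1}(t_{j+1}-t_{j})$. For (ii), the same maneuver in the reverse direction reduces the corresponding upper bound on $t_{k+1}-t_{k}$ to $t_{k}\ge a/2$, and the consequence $t_{k}\le k$ follows from $(1-a+\sqrt{4b-a^{2}})/2\le 1$, which is equivalent to $4b\le 1+2a+2a^{2}$ and thus follows from $b\le 1/4$. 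Part (iii) is a direct expansion of the squared identity, and the lower bound $a t_{k}-b+1/4\ge a t_{k}$ is precisely the hypothesis $b\le 1/4$.

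For (v), I would rewrite the desired inequality as $(t_{k}-1)\,t_{k+1}^{2}\ge t_{k}(t_{k}-1)^{2}$. The case $t_{k}=1$ (i.e.\ $k=1$) is trivial. When $t_{k}>1$, dividing by $t_{k}-1$ reduces the claim to $t_{k+1}^{2}\ge t_{k}^{2}-t_{k}$; substituting $t_{k+1}^{2}=t_{k}^{2}+t_{k+1}-at_{k}+b-1/4$ from (iii) reduces this further to $t_{k+1}+(1-a)t_{k}+b-1/4\ge 0$, which is immediate from $t_{k+1}\ge 1$, $(1-a)t_{k}\ge 0$, and $b\ge 0$.

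The main obstacle is the upper bound in (iv). The lower bound $0\le (t_{k}-1)/t_{k+1}$ is immediate from $t_{k}\ge 1$. The naive route of combining $t_{k}\le k$ from (ii) with a putative lower bound $t_{k+1}\ge k+1/2$ fails, because one can check (e.g.\ $a=b=0$ gives $t_{k+1}=(k+2)/2<k+1/2$ for $k\ge 2$) that no such uniform bound holds. Even more sharply, combining the tightest linear bounds from (i) and (ii) only closes the estimate in the boundary case $b=a^{2}/4$. I therefore plan to prove the cross-multiplied form $(t_{k}-1)(k+1/2)\le (k-1)\,t_{k+1}$ by induction on $k$: substituting the recurrence for $t_{k+1}$ and isolating the square root reduces the inductive step, after squaring both sides, to a quadratic inequality in $t_{k}$, which I would close by using the identity in (iii) to rewrite quadratic terms in terms of $t_{k-1}$, then applying the inductive hypothesis together with the admissibility constraints $a\in[0,1)$ and $b\in[a^{2}/4,1/4]$. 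This calculation is tedious but mechanical, and it is the only step where the precise interplay between the upper and lower estimates of consecutive $t_{k}$'s must be exploited.
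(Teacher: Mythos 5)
First, a point of comparison: the paper does not prove this lemma at all; it imports it verbatim as Lemma~6 of the cited reference of Tanabe et al., so there is no in-paper argument to measure your proposal against, and I can only judge it on its own terms. Your treatments of (i), (ii), (iii) and (v) are correct and complete modulo routine algebra: the simultaneous induction giving $t_k\ge 1$, the squaring reductions of the two one-step bounds to $b\ge a^2/4$ and $t_k\ge a/2$ respectively, the identity $(t_{k+1}-\tfrac12)^2=t_k^2-at_k+b$ behind (iii), and the reduction of (v) to $t_{k+1}+(1-a)t_k+b-\tfrac14\ge 0$ are all sound.

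The gap is in (iv), the one part you yourself flag as the obstacle. You correctly observe that pairing the closed-form upper bound on $t_k$ from (ii) with the closed-form lower bound on $t_{k+1}$ from (i) fails off the boundary $b=a^2/4$, but the induction you propose instead is only sketched: the inductive step (substitute the recurrence, square, invoke (iii) to pass to $t_{k-1}$, apply the hypothesis) is never carried out, and it is not evident that it closes. As written this is an unexecuted plan rather than a proof. The detour is also unnecessary, because the failure you diagnose comes specifically from replacing $t_{k+1}$ by its closed-form lower bound; keep instead the one-step bound $t_{k+1}\ge t_k+\frac{1-a}{2}$ from (i). For $k\ge 2$ the cross-multiplied claim $(t_k-1)(k+\tfrac12)\le (k-1)\,t_{k+1}$ then follows from $(t_k-1)(k+\tfrac12)\le (k-1)\bigl(t_k+\frac{1-a}{2}\bigr)$, which after expanding is exactly $\tfrac32\,t_k\le k+\tfrac12+\frac{(k-1)(1-a)}{2}$, i.e.\ $t_k\le k-\frac{a(k-1)}{3}$. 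Since $4b-a^2\le 1-a^2\le 1$, part (ii) gives $t_k\le 1+(k-1)\frac{2-a}{2}=k-\frac{a(k-1)}{2}\le k-\frac{a(k-1)}{3}$, and the case $k=1$ is trivial. So the statement is true and your overall strategy is salvageable, but as submitted the proof of (iv) is incomplete.
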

\subsection{Properties of the generated sequence}
From this section onward, including subsequent sections, we define  
$$\sigma_k(z) = \min_{i=1,\cdots,m} \big( f_i(x_k) - f_i(z) \big).$$
\begin{lemma}\label{lem:inequal-sigma}
Suppose \cref{assum:Lj-muj} hold. Let $\{x_k\}$ and $\{y_k\}$ be the sequences  generated by \cref{algo:Multiobjective} with $a\in[0,1)$ and $b\in[\frac{a^{2}}{4},\frac{1}{4}]$. Then for any $k \ge 1$,
\begin{equation*}
\begin{aligned}  
\sigma_{k+1}(z) &\le -\frac{1}{s}\langle x_{k+1} - y_k, y_k - z \rangle - \frac{1}{2s}\|x_{k+1} - y_k\|^2, \\  
\sigma_{k+1}(z) - \sigma_k(z) &\le \max_{i=1,\cdots,m} \big(f_i(x_{k+1}) - f_i(x_k)\big) \\  
&\le -\frac{1}{s}\langle x_{k+1} - y_k, y_k - x_k \rangle - \frac{1}{2s}\|x_{k+1} - y_k\|^2.  
\end{aligned}
\end{equation*}  
\end{lemma}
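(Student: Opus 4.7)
The plan is to derive both inequalities from three ingredients: (a) the Lipschitz descent lemma applied to each $f_i$, available because $s\le 1/L$; (b) the first-order convexity inequality for each $f_i$; and (c) the projection characterization built into the update, namely $\tfrac{1}{s}(y_k-x_{k+1})=\sum_{i=1}^m\theta_i^k\nabla f_i(y_k)=\proj_{C(y_k)}\bigl(\tfrac{1}{s}(y_k-x_k)\bigr)$, together with its variational inequality $\langle u-p,\,q-p\rangle\le 0$ for every $q\in C(y_k)$, where $u=\tfrac{1}{s}(y_k-x_k)$ and $p=\tfrac{1}{s}(y_k-x_{k+1})$.

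For the first inequality I would combine (a) with (b) at the anchor point $z$ to get the pointwise estimate $f_i(x_{k+1})-f_i(z)\le \langle \nabla f_i(y_k),\,x_{k+1}-z\rangle+\tfrac{1}{2s}\|x_{k+1}-y_k\|^2$. Averaging against $\theta_i^k$ collapses $\sum_i\theta_i^k\nabla f_i(y_k)$ into $\tfrac{1}{s}(y_k-x_{k+1})$; since $\sigma_{k+1}(z)$ is a minimum over $i$, it is dominated by this convex combination, and the routine decomposition $x_{k+1}-z=(x_{k+1}-y_k)+(y_k-z)$ then rewrites the bound in the claimed form $-\tfrac{1}{s}\langle x_{k+1}-y_k,\,y_k-z\rangle-\tfrac{1}{2s}\|x_{k+1}-y_k\|^2$.

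For the second chain, the middle inequality is combinatorial: picking $i_0\in\argmin_i(f_i(x_k)-f_i(z))$ gives $\sigma_k(z)=f_{i_0}(x_k)-f_{i_0}(z)$ and $\sigma_{k+1}(z)\le f_{i_0}(x_{k+1})-f_{i_0}(z)$, so subtracting yields $\sigma_{k+1}(z)-\sigma_k(z)\le f_{i_0}(x_{k+1})-f_{i_0}(x_k)\le \max_i(f_i(x_{k+1})-f_i(x_k))$. The main obstacle is the final inequality: a $\theta_i^k$-convex combination of pointwise bounds is useless here because $\max_i\ge\sum_i\theta_i^k(\cdot)$ goes the wrong way. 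Instead I would apply the variational inequality in (c) pointwise in $i$ with $q=\nabla f_i(y_k)\in C(y_k)$ to obtain $\langle \nabla f_i(y_k),\,x_{k+1}-x_k\rangle\le -\tfrac{1}{s}\langle x_{k+1}-y_k,\,x_{k+1}-x_k\rangle$, and then insert this into the descent-plus-convexity bound $f_i(x_{k+1})-f_i(x_k)\le \langle \nabla f_i(y_k),\,x_{k+1}-x_k\rangle+\tfrac{1}{2s}\|x_{k+1}-y_k\|^2$. Expanding $x_{k+1}-x_k=(x_{k+1}-y_k)+(y_k-x_k)$ then produces a bound that no longer depends on $i$, so it also dominates the maximum. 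Once one recognises that the last inequality must be handled pointwise in $i$ via the projection variational inequality (rather than via the $\theta^k$-convex combination used for the first inequality), the remaining manipulations are bookkeeping.
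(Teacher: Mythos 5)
Your proposal is correct, and it follows essentially the same route as the argument the paper points to (the paper omits the proof, deferring to Lemmas 6.4 and 6.5 of Sonntag--Peitz): descent lemma plus convexity at the anchor point, the $\theta^k$-convex combination collapsing to $\tfrac{1}{s}(y_k-x_{k+1})$ for the first inequality, and the projection variational inequality applied pointwise in $i$ for the second. In particular, you correctly identified the one non-obvious point --- that the $\max_i$ bound cannot be obtained from the convex combination and must instead go through $\langle u-p, \nabla f_i(y_k)-p\rangle\le 0$ --- so the proof is complete.
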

\begin{proof}
The proof is similar to that of Lemmas 6.4 and 6.5 in \cite{sonntag2024fast}, and is thus omitted here.
\end{proof}
\begin{corollary}\label{coro:inequal-sigma}
Suppose \cref{assum:Lj-muj} hold. Let $\{x_k\}$ and $\{t_k\}$ be the sequences  generated by \cref{algo:Multiobjective} with $a\in[0,1)$ and $b\in[\frac{a^{2}}{4},\frac{1}{4}]$. For any $1\le k_1 \le k_2$, we have
\begin{equation*}
\sigma_{k_2}(z) - \sigma_{k_1}(z) \le \frac{1}{2s} \left[ \|x_{k_1} - x_{k_1-1}\|^2 - \|x_{k_2} - x_{k_2-1}\|^2 \right] - \frac{1}{2s} \sum_{k=k_1}^{k_2-1} \frac{1}{t_k} \|x_k - x_{k-1}\|^2.
\end{equation*}
\end{corollary}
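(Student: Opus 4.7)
The plan is to derive a single-step recursion for $\sigma_{k+1}(z)-\sigma_k(z)$ that telescopes cleanly when summed, then invoke \cref{lem:tk-properties}(v) to produce the $1/t_k$ term.

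First I would start from the second inequality in \cref{lem:inequal-sigma}, namely
\[
\sigma_{k+1}(z)-\sigma_k(z)\le -\frac{1}{s}\langle x_{k+1}-y_k,\,y_k-x_k\rangle-\frac{1}{2s}\|x_{k+1}-y_k\|^2,
\]
and substitute the momentum identity from \cref{eq:algorithm-iterschme},
\[
y_k-x_k=\beta_k(x_k-x_{k-1}),\qquad \beta_k:=\frac{t_k-1}{t_{k+1}}.
\]
Writing $u=x_{k+1}-x_k$ and $v=x_k-x_{k-1}$, so that $x_{k+1}-y_k=u-\beta_k v$, the right-hand side expands to
\[
-\frac{1}{s}\langle u-\beta_k v,\beta_k v\rangle-\frac{1}{2s}\|u-\beta_k v\|^2
=\frac{\beta_k^2}{2s}\|v\|^2-\frac{1}{2s}\|u\|^2,
\]
after the cross-terms cancel. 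This gives the clean one-step bound
\[
\sigma_{k+1}(z)-\sigma_k(z)\le \frac{\beta_k^2}{2s}\|x_k-x_{k-1}\|^2-\frac{1}{2s}\|x_{k+1}-x_k\|^2.
\]

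Next, I would apply \cref{lem:tk-properties}(v), which yields $\beta_k^2\le 1-1/t_k$. Plugging this in and rearranging gives
\[
\sigma_{k+1}(z)-\sigma_k(z)\le \frac{1}{2s}\bigl(\|x_k-x_{k-1}\|^2-\|x_{k+1}-x_k\|^2\bigr)-\frac{1}{2st_k}\|x_k-x_{k-1}\|^2.
\]
Summing this inequality telescopically from $k=k_1$ to $k=k_2-1$ collapses the first pair to $\|x_{k_1}-x_{k_1-1}\|^2-\|x_{k_2}-x_{k_2-1}\|^2$ and accumulates the $1/t_k$ terms into the sum on the right, giving exactly the statement of the corollary.

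The only nontrivial step is the algebraic cancellation in the expansion of $-\langle u-\beta_k v,\beta_k v\rangle-\tfrac12\|u-\beta_k v\|^2$; once this identity is noted, the rest is telescoping plus one application of \cref{lem:tk-properties}(v). I do not foresee any real obstacle, since both ingredients (the descent-type bound and the parameter inequality) are already available from earlier in the paper.
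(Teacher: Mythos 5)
Your proposal is correct and follows essentially the same route as the paper: the same starting inequality from \cref{lem:inequal-sigma}, the same algebraic identity reducing the right-hand side to $\frac{1}{2s}\bigl(\|y_k-x_k\|^2-\|x_{k+1}-x_k\|^2\bigr)$, the same use of \cref{lem:tk-properties}(v), and the same telescoping sum. The only cosmetic difference is that you expand the cross terms directly rather than completing the square via $\|x_{k+1}-x_k\|^2=\|(x_{k+1}-y_k)+(y_k-x_k)\|^2$.
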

\begin{proof}
According to \cref{lem:inequal-sigma}, we have
\begin{equation}\label{eq:sigmamiussigam}
\begin{aligned}
&~~~~~\sigma_{k+1}(z) - \sigma_k(z) \le \max_{i=1,\cdots,m} \left( f_i(x_{k+1}) - f_i(x_k) \right) \\
&\le -\frac{1}{s} \langle x_{k+1} - y_k, y_k - x_k \rangle - \frac{1}{2s} \|x_{k+1} - y_k\|^2 \\
&= \frac{1}{2s} \left[ \|y_k - x_k\|^2 - \|x_{k+1} - x_k\|^2 \right] \\
&= \frac{1}{2s} \left[ \left( \frac{t_k - 1}{t_{k+1}} \right)^2 \|x_k - x_{k-1}\|^2 - \|x_{k+1} - x_k\|^2 \right] \\
&= \frac{1}{2s} \left[ \|x_k - x_{k-1}\|^2 - \|x_{k+1} - x_k\|^2 \right] + \frac{1}{2s} \left( \left( \frac{t_k - 1}{t_{k+1}} \right)^2 - 1 \right) \|x_k - x_{k-1}\|^2 \\
&\le \frac{1}{2s} \left[ \|x_k - x_{k-1}\|^2 - \|x_{k+1} - x_k\|^2 \right] - \frac{1}{2s} \frac{1}{t_k} \|x_k - x_{k-1}\|^2.
\end{aligned}
\end{equation}
where the last inequality follows from \cref{lem:tk-properties}. Finally, summing the above inequality from $k = k_1$ to $k_2 - 1$, we obtain the conclusion.
\end{proof}
A key property of the multi-objective acceleration algorithm is that the generated sequence lies within a level set. For the sequence $\{x_k\}$ generated by \cref{algo:Multiobjective}, we define a set of auxiliary sequences $\{\mathcal{W}_k^i\}_{k=1}^\infty$, $i=1,\cdots,m$, to derive similar conclusions.
\begin{equation}\label{eq:W_ik}
\mathcal W_k^i:=f_i(x_k)+\frac{1}{2s}\|x_k-x_{k-1}\|^2.
\end{equation}
\begin{lemma}\label{lem:aux1}
Suppose \cref{assum:Lj-muj} hold. Let $\{x_k\}$ and $\{t_k\}$ be the sequences  generated by \cref{algo:Multiobjective} with $a\in[0,1)$ and $b\in[\frac{a^{2}}{4},\frac{1}{4}]$. Let $\{\mathcal W_k^i\}$ be defined in \cref{eq:W_ik}. Then, 
\begin{itemize}
    \item[(i)] $\{\mathcal{W}_k^i\}$ is monotonically non-increasing for $i=1,\cdots,m$;
    \item[(ii)]  Assuming $f_i $ is bounded below for $i=1,\cdots,m$, then $ \lim_{k \to \infty} \mathcal{W}_k^i $ exists, and
    $$\sum_{k=1}^\infty \frac{1}{t_k} \|x_k - x_{k-1}\|^2 < +\infty.$$
\end{itemize}
\end{lemma}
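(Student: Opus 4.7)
The plan is to recycle the key per-index inequality already derived inside the proof of \cref{coro:inequal-sigma}. From \cref{lem:inequal-sigma}, for each fixed $i\in\{1,\ldots,m\}$,
$$f_i(x_{k+1})-f_i(x_k)\le -\frac{1}{s}\langle x_{k+1}-y_k, y_k-x_k\rangle-\frac{1}{2s}\|x_{k+1}-y_k\|^2,$$
so substituting $y_k-x_k=\frac{t_k-1}{t_{k+1}}(x_k-x_{k-1})$ and repeating the algebraic manipulation in \cref{eq:sigmamiussigam} (polarization identity followed by \cref{lem:tk-properties}(v)) gives the one-step estimate
$$f_i(x_{k+1})-f_i(x_k)\le \frac{1}{2s}\bigl[\|x_k-x_{k-1}\|^2-\|x_{k+1}-x_k\|^2\bigr]-\frac{1}{2s t_k}\|x_k-x_{k-1}\|^2.$$
Rearranging this directly yields $\mathcal W_{k+1}^i-\mathcal W_k^i\le -\frac{1}{2s t_k}\|x_k-x_{k-1}\|^2\le 0$, which is exactly assertion~(i). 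Note that the argument is genuinely index-wise: the same bound holds for every $i$ without taking a max, because \cref{lem:inequal-sigma} delivers the left inequality $\sigma_{k+1}(z)-\sigma_k(z)\le \max_i(f_i(x_{k+1})-f_i(x_k))$ together with the individual bound on $\max_i(f_i(x_{k+1})-f_i(x_k))$, and the same control applies to each component.

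For (ii), monotonicity from (i) combined with $\mathcal W_k^i\ge f_i(x_k)\ge \inf_{x}f_i(x)>-\infty$ shows $\{\mathcal W_k^i\}$ is bounded below, so the limit $\lim_{k\to\infty}\mathcal W_k^i$ exists. To obtain the summability statement, telescope the one-step inequality from $k=1$ to $k=N$:
$$\frac{1}{2s}\sum_{k=1}^N\frac{1}{t_k}\|x_k-x_{k-1}\|^2\le \mathcal W_1^i-\mathcal W_{N+1}^i\le \mathcal W_1^i-\inf_{x}f_i(x),$$
and let $N\to\infty$ to conclude $\sum_{k=1}^\infty\frac{1}{t_k}\|x_k-x_{k-1}\|^2<+\infty$.

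There is no real obstacle here; the result is a discrete analogue of \cref{lem:Dynamical-system-lemma1} and the entire argument is a telescoping of an inequality whose harder form has already been established in \cref{lem:inequal-sigma} and \cref{coro:inequal-sigma}. The only thing to be careful about is to invoke the per-component bound rather than the $\sigma_k$ form, since $\mathcal W_k^i$ is defined componentwise; but this is exactly what the middle line of \cref{lem:inequal-sigma} provides.
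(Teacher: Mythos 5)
Your proof is correct and follows essentially the same route as the paper: both extract the per-component one-step inequality from the chain in the proof of \cref{coro:inequal-sigma} (which bounds $\max_i(f_i(x_{k+1})-f_i(x_k))$ and hence each component), rearrange it into $\mathcal W_{k+1}^i+\frac{1}{2st_k}\|x_k-x_{k-1}\|^2\le\mathcal W_k^i$, and then telescope using the lower bound $\mathcal W_k^i\ge\inf f_i$. Your handling of the $\frac{1}{2s}$ factor in the telescoped sum is in fact slightly more careful than the paper's.
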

\begin{proof}
(i) From \cref{eq:sigmamiussigam}, we obtain  
$$f_i(x_{k+1}) - f_i(x_k) \le \frac{1}{2s} \left[ \|x_k - x_{k-1}\|^2 - \|x_{k+1} - x_k\|^2 \right] - \frac{1}{2s} \frac{1}{t_k} \|x_k - x_{k-1}\|^2.$$  
Thus, for any $k \ge 1$,  
\begin{equation}\label{eq:W_ikt_k}\mathcal{W}_{k+1}^i + \frac{1}{2s} \frac{1}{t_k} \|x_k - x_{k-1}\|^2 \le \mathcal{W}_k^i.\end{equation}  
Therefore, $\{\mathcal{W}_k^i\}$ is monotonically non-increasing.

(ii) Since $ f_i $ has a lower bound $ \inf f_i > -\infty $, it follows that $ \mathcal{W}_k^i \ge f_i(x_k) \ge \inf f_i $ for all $ k \ge 1 $. Therefore, $ \lim_{k \to \infty} \mathcal{W}_k^i $ exists. Furthermore, according to  \cref{eq:W_ikt_k}, we have  
$$\sum_{k=1}^n \frac{1}{t_k} \|x_k - x_{k-1}\|^2 \le \sum_{k=1}^n \left( \mathcal{W}_k^i - \mathcal{W}_{k+1}^i \right) = \mathcal{W}_1^i - \mathcal{W}_{n+1}^i \le f_i(x_0) - \inf f_i.$$  
Taking the limit as $ n \to \infty $, the conclusion follows.
\end{proof}
\begin{corollary}\label{coro:levelset}
Suppose \cref{assum:Lj-muj} hold. Let $\{x_k\}$  be the sequence  generated by \cref{algo:Multiobjective} with $a\in[0,1)$ and $b\in[\frac{a^{2}}{4},\frac{1}{4}]$. Then, we have
$$f_i(x_k)\le f_i(x_0),$$
for $i=1,\cdots,m$. 
\end{corollary}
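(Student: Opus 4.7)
The plan is to mirror the continuous-time argument used in \cref{coro:Dynamical-system-corollary1}, replacing the energy function $\mathcal{W}_i(t)$ by its discrete counterpart $\mathcal{W}_k^i$ introduced in \cref{eq:W_ik}. The key inputs are already in place: the monotone non-increase of $\{\mathcal{W}_k^i\}$ from \cref{lem:aux1}(i), and the initialization $x_0 = x_1$ prescribed in line 1 of \cref{algo:Multiobjective}.

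First, I would observe from the definition $\mathcal{W}_k^i = f_i(x_k) + \frac{1}{2s}\|x_k - x_{k-1}\|^2$ that $f_i(x_k) \le \mathcal{W}_k^i$ for every $k \ge 1$, since $\|x_k - x_{k-1}\|^2 \ge 0$. Next, \cref{lem:aux1}(i) gives the chain
\[
f_i(x_k) \le \mathcal{W}_k^i \le \mathcal{W}_{k-1}^i \le \cdots \le \mathcal{W}_1^i.
\]
Finally, I would evaluate $\mathcal{W}_1^i$ using the initial condition $x_0 = x_1$: indeed, $\|x_1 - x_0\| = 0$, so $\mathcal{W}_1^i = f_i(x_1) = f_i(x_0)$. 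Stringing these inequalities together yields $f_i(x_k) \le f_i(x_0)$ for all $i = 1, \ldots, m$ and all $k \ge 1$.

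There is no real obstacle here; this is a one-line corollary of \cref{lem:aux1}, entirely analogous to how \cref{coro:Dynamical-system-corollary1} follows from \cref{lem:Dynamical-system-lemma1} in the continuous setting. The only subtle point worth flagging explicitly is the role of the initialization $x_0 = x_1$, which is what allows $\mathcal{W}_1^i$ to collapse to $f_i(x_0)$ and makes the bound tight at $k = 1$; without this choice, one would only obtain $f_i(x_k) \le f_i(x_1) + \frac{1}{2s}\|x_1 - x_0\|^2$, which would still suffice for the level-set boundedness corollary needed downstream but would not match the clean statement written.
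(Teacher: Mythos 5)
Your proposal is correct and follows exactly the same route as the paper: bound $f_i(x_k)$ by $\mathcal{W}_k^i$, invoke the monotone non-increase from \cref{lem:aux1}(i), and use the initialization $x_0=x_1$ to identify $\mathcal{W}_1^i=f_i(x_0)$. The remark about the role of the initialization is accurate but not something the paper dwells on; no changes are needed.
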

\begin{proof}
According to \cref{lem:aux1} (i), $\{\mathcal{W}_k^i\}$ is monotonically non-increasing for $i=1,\cdots,m$. Therefore, for any $k \ge 1$, we have  
$$f_i(x_k) \le \mathcal{W}_k^i \le \mathcal{W}_1^i = f_i(x_1) + \frac{1}{2s} \|x_1 - x_0\|^2 = f_i(x_0).$$  
This completes the proof.
\end{proof}
\subsection{Convergence rate}
For any $z\in \mathbb{R}^n$, let the auxiliary sequence (discrete Lyapunov function) be defined as  
\begin{equation} \label{eq:Lyapunov-sequence}\mathcal{E}_k(z) = t_k^2 \sigma_k(z) + \frac{1}{2s} \|\eta_k - z\|^2,\end{equation}  
where $\eta_k = x_k + (t_k - 1)(x_k - x_{k-1})$. Then,  
\begin{equation}\label{eq:eta-eta}
\begin{aligned}
\eta_{k+1} - \eta_k &= x_{k+1} + (t_{k+1} - 1)(x_{k+1} - x_k) - x_k - (t_k - 1)(x_k - x_{k-1}) \\
&= t_{k+1}(x_{k+1} - x_k) - (t_k - 1)(x_k - x_{k-1}) \\
&= t_{k+1}(x_{k+1} - y_k), \\
\eta_{k+1} + \eta_k &= \eta_{k+1} - \eta_k + 2\eta_k \\
&= t_{k+1}(x_{k+1} - y_k) + 2x_k + 2(t_k - 1)(x_k - x_{k-1}) \\
&= t_{k+1}(x_{k+1} - y_k) + 2x_k + 2t_{k+1}(y_k - x_k),
\end{aligned}
\end{equation}  
Additionally, we denote  
\begin{equation}
\label{eq:zeta}
\zeta_k(a, b) = a t_k - b + \frac{1}{4},
\end{equation}
where $a\in[0,1)$ and $b\in[\frac{a^2}{4},\frac14]$. 

In single-objective optimization, the discrete Lyapunov function is always monotonically non-increasing. However, this conclusion does not hold in multi-objective optimization. As shown in the following lemma, since $\sigma_k(z)$ is not non-negative, we can only obtain a slightly weaker result for $\{\mathcal{E}_k(z)\}$ than monotonic non-increasing.
\begin{lemma}\label{lem:Lyaounov1}
Suppose \cref{assum:Lj-muj} hold. Let $\{x_k\}$ and $\{t_k\}$ be the sequences  generated by \cref{algo:Multiobjective} with $a\in[0,1)$ and $b\in[\frac{a^{2}}{4},\frac{1}{4}]$. For  $ z \in \mathbb{R}^n $, let $\{\mathcal{E}_k(z)\}$ and $\{\zeta_k(a,b)\}$ be defined as in \cref{eq:Lyapunov-sequence} and \cref{eq:zeta}, respectively. Then for any $ k \ge 1 $, the following inequality holds:  
$$\mathcal{E}_{k+1}(z) - \mathcal{E}_k(z) + \zeta_k(a,b) \sigma_k(z) \le 0.$$
\end{lemma}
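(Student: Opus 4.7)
The plan is to produce the discrete Lyapunov inequality in direct analogy with \cref{lem:Dynamical-system-lemma2}, by taking a convex combination of the two bounds in \cref{lem:inequal-sigma} with weights that will match the telescoping structure of $\|\eta_k-z\|^2$. Specifically, I will multiply the descent-type inequality $\sigma_{k+1}(z)-\sigma_k(z)\le -\frac{1}{s}\langle x_{k+1}-y_k,y_k-x_k\rangle-\frac{1}{2s}\|x_{k+1}-y_k\|^2$ by $t_{k+1}(t_{k+1}-1)$, and the bound $\sigma_{k+1}(z)\le -\frac{1}{s}\langle x_{k+1}-y_k,y_k-z\rangle-\frac{1}{2s}\|x_{k+1}-y_k\|^2$ by $t_{k+1}$. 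Adding the two produces $t_{k+1}^2\sigma_{k+1}(z)$ on the left-hand side and $(t_{k+1}-1)t_{k+1}\sigma_k(z)$ on the right.

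The crucial simplification then comes from the momentum identity built into the algorithm: since $t_{k+1}(y_k-x_k)=(t_k-1)(x_k-x_{k-1})$, the linear combination of the two inner products collapses into
\begin{equation*}
(t_{k+1}-1)(y_k-x_k)+(y_k-z)=t_{k+1}(y_k-x_k)+(x_k-z)=\eta_k-z.
\end{equation*}
Using $t_{k+1}(x_{k+1}-y_k)=\eta_{k+1}-\eta_k$ from \cref{eq:eta-eta}, the weighted sum takes the clean form
\begin{equation*}
t_{k+1}^2\sigma_{k+1}(z)\le (t_{k+1}-1)t_{k+1}\sigma_k(z)-\frac{1}{s}\langle\eta_{k+1}-\eta_k,\eta_k-z\rangle-\frac{1}{2s}\|\eta_{k+1}-\eta_k\|^2.
\end{equation*}
Then I will invoke the standard three-point identity $-\langle\eta_{k+1}-\eta_k,\eta_k-z\rangle-\tfrac{1}{2}\|\eta_{k+1}-\eta_k\|^2=\tfrac{1}{2}\|\eta_k-z\|^2-\tfrac{1}{2}\|\eta_{k+1}-z\|^2$ to rewrite the right-hand side purely in terms of $\|\eta_k-z\|^2$ and $\|\eta_{k+1}-z\|^2$, yielding
\begin{equation*}
\mathcal{E}_{k+1}(z)-\mathcal{E}_k(z)\le \bigl[(t_{k+1}-1)t_{k+1}-t_k^2\bigr]\sigma_k(z)=-\bigl(t_k^2-t_{k+1}^2+t_{k+1}\bigr)\sigma_k(z).
\end{equation*}

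Finally, \cref{lem:tk-properties}(iii) identifies $t_k^2-t_{k+1}^2+t_{k+1}=at_k-b+\tfrac{1}{4}=\zeta_k(a,b)$, producing the claimed bound. The main obstacle, if any, is recognizing the right pair of weights $t_{k+1}(t_{k+1}-1)$ and $t_{k+1}$; once chosen, every subsequent manipulation is purely algebraic and relies only on the two bounds of \cref{lem:inequal-sigma}, the defining identity of $y_k$, and the sequence relation from \cref{lem:tk-properties}(iii). Unlike the single-objective setting, the sign of $\sigma_k(z)$ is indeterminate, so the extra term $\zeta_k(a,b)\sigma_k(z)$ will not be absorbed and must be carried forward; this is precisely why the statement is only ``almost monotone.''
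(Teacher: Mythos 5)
Your proposal is correct and follows essentially the same route as the paper: the paper likewise combines the two bounds of \cref{lem:inequal-sigma} with weights $t_{k+1}$ and $t_{k+1}(t_{k+1}-1)$ (organized there as $t_{k+1}[\sigma_{k+1}(z)+(t_{k+1}-1)(\sigma_{k+1}(z)-\sigma_k(z))]$), collapses the inner products via the momentum identity and \cref{eq:eta-eta}, applies the same norm expansion for $\|\eta_{k+1}-z\|^2-\|\eta_k-z\|^2$, and invokes \cref{lem:tk-properties}(iii) to identify $t_k^2-t_{k+1}^2+t_{k+1}=\zeta_k(a,b)$. All of your intermediate identities check out, so no gap.
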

\begin{proof}
For ease of discussion, we denote $\zeta_k = \zeta_k(a,b)$, and from \cref{lem:tk-properties}, we have $t_{k+1} = t_{k+1}^2 - t_k^2 + \zeta_k$. Then,  
\begin{equation}\label{eq:lemma-convergence-1}
\begin{aligned}
&~~~~~t_{k+1}^2\sigma_{k+1}(z) - t_k^2\sigma_k(z) + \zeta_k\sigma_k(z) \\&= (t_{k+1}^2 - t_k^2 + \zeta_k)\sigma_{k+1}(z) + (t_k^2 - \zeta_k)(\sigma_{k+1}(z) - \sigma_k(z)) \\
&= t_{k+1}\left[\sigma_{k+1}(z) + (t_{k+1} - 1)(\sigma_{k+1}(z) - \sigma_k(z))\right].
\end{aligned}
\end{equation}
Moreover, by \cref{lem:inequal-sigma}, we have
\begin{equation}
\label{eq:lemma-convergence-2}
\begin{aligned}
&~~~~~\sigma_{k+1}(z) + (t_{k+1} - 1)(\sigma_{k+1}(z) - \sigma_k(z)) \\
&\le -\frac{1}{s}\langle x_{k+1} - y_k, y_k - z\rangle - \frac{1}{2s}\|x_{k+1} - y_k\|^2 \\
& \qquad -\frac{1}{s}\langle x_{k+1} - y_k, (t_{k+1} - 1)(y_k - x_k)\rangle - \frac{t_{k+1} - 1}{2s}\|x_{k+1} - y_k\|^2 \\
 &= -\frac{1}{2s}\langle x_{k+1} - y_k, 2t_{k+1}y_k - 2(t_{k+1} - 1)x_k - 2z + t_{k+1}(x_{k+1} - y_k)\rangle \\
&= -\frac{1}{2s}\langle x_{k+1} - y_k, 2t_{k+1}(y_k - x_k) + 2x_k + t_{k+1}(x_{k+1} - y_k) - 2z\rangle \\
&= -\frac{1}{2s}\langle x_{k+1} - y_k, \eta_{k+1} + \eta_k - 2z\rangle.
\end{aligned}
\end{equation}
where the last equality is derived from \cref{eq:eta-eta}. Additionally, we have  
\begin{equation}\label{eq:eta-norm}\begin{aligned}
\|\eta_{k+1} - z\|^2 - \|\eta_k - z\|^2 &= \|\eta_{k+1} - \eta_k\|^2 + 2\langle \eta_{k+1} - \eta_k, \eta_k - z \rangle \\
&= \langle t_{k+1}(x_{k+1} - y_k), \eta_{k+1} + \eta_k - 2z \rangle.
\end{aligned}\end{equation}  
Combining \cref{eq:lemma-convergence-1,eq:lemma-convergence-2,eq:eta-norm}, we obtain  
$$
t_{k+1}^2\sigma_{k+1}(z) - t_k^2\sigma_k(z) + \zeta_k\sigma_k(z) \le -\frac{1}{2s}\left[\|\eta_{k+1} - z\|^2 - \|\eta_k - z\|^2\right].
$$
By the definition of $\mathcal{E}_k(z)$, rearranging the above inequality yields the conclusion. 
\end{proof}
\begin{lemma}
\label{lem:Lyapunov2}
Assuming the conditions stated in \cref{lem:Lyaounov1} hold, then for any $ k \ge 1 $ and $z\in \mathbb{R}^n$,  
$$
\begin{aligned}
\mathcal{E}_{k+1}(z) +  \sigma_{k+1}(z) \sum_{p=1}^{k} \zeta_p(a,b)  + \frac{1}{2s} \sum_{p=1}^k \left( \frac{a^2}{2}(p-1) + Q(a,b) \right) \|x_p - x_{p-1}\|^2 & \le \mathcal{E}_1(z).
\end{aligned}
$$  
where $ Q(a,b) = \frac{1}{2(1 - a)} \left( \frac{1}{4} - b \right)^2 $.
\end{lemma}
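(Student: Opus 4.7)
The plan is to introduce the augmented quantity
\begin{equation*}
\mathcal{G}_k := \mathcal{E}_k(z) + \sigma_k(z) \sum_{p=1}^{k-1} \zeta_p(a,b),
\end{equation*}
with the empty sum taken to be zero, and to track its evolution. Writing $A_k := \sum_{p=1}^k \zeta_p(a,b)$, a direct rearrangement of the one-step inequality from \cref{lem:Lyaounov1} gives
\begin{equation*}
\mathcal{G}_{k+1} - \mathcal{G}_k \le \bigl(\sigma_{k+1}(z) - \sigma_k(z)\bigr)\, A_k.
\end{equation*}
Since \cref{lem:tk-properties}(iii) yields $\zeta_p \ge a t_p \ge 0$, hence $A_k \ge 0$, I can invoke \cref{coro:inequal-sigma} with $k_1 = k$, $k_2 = k+1$ to upper-bound the right-hand side by
\begin{equation*}
\frac{A_k}{2s}\bigl[\|x_k - x_{k-1}\|^2 - \|x_{k+1} - x_k\|^2\bigr] - \frac{A_k}{2s\,t_k}\|x_k - x_{k-1}\|^2.
\end{equation*}

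Summing from $k = 1$ to $K$ telescopes the left-hand side into $\mathcal{G}_{K+1} - \mathcal{E}_1(z)$. On the right I would apply Abel summation (summation by parts) to the bracketed differences, using $A_k - A_{k-1} = \zeta_k$, which converts
\begin{equation*}
\sum_{k=1}^K A_k\bigl(\|x_k - x_{k-1}\|^2 - \|x_{k+1} - x_k\|^2\bigr) = \sum_{k=1}^K \zeta_k \|x_k - x_{k-1}\|^2 - A_K\|x_{K+1} - x_K\|^2.
\end{equation*}
Rearranging then produces
\begin{equation*}
\mathcal{E}_{K+1}(z) + \sigma_{K+1}(z)\, A_K + \frac{1}{2s}\sum_{k=1}^K \left(\frac{A_k}{t_k} - \zeta_k\right) \|x_k - x_{k-1}\|^2 + \frac{A_K}{2s}\|x_{K+1}-x_K\|^2 \le \mathcal{E}_1(z),
\end{equation*}
and discarding the non-negative final term reduces the claim to the pointwise coefficient estimate
\begin{equation*}
\frac{A_k}{t_k} - \zeta_k \ge \frac{a^2}{2}(k-1) + Q(a,b), \qquad k \ge 1.
\end{equation*}

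The hard part will be verifying this last algebraic inequality. Expanding with $A_k = a\sum_{p=1}^k t_p + k(\tfrac{1}{4} - b)$ and $t_k \zeta_k = a t_k^2 + (\tfrac{1}{4} - b)t_k$, the difference $A_k - t_k \zeta_k$ splits into an $a$-weighted piece $a\bigl(\sum_{p=1}^k t_p - t_k^2\bigr)$ and a $(\tfrac{1}{4}-b)$-weighted piece $(\tfrac{1}{4}-b)(k - t_k)$. The two-sided bounds from \cref{lem:tk-properties}(i)--(ii) must be applied carefully, namely the lower bound $t_p \ge 1 + (1-a)(p-1)/2$ for $\sum_{p=1}^k t_p$ and the upper bound $t_k \le 1 + \beta(k-1)$ with $\beta = (1-a+\sqrt{4b-a^2})/2$ for $t_k^2$. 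The growing piece $\tfrac{a^2}{2}(k-1)$ then emerges from the quadratic-in-$k$ contribution when $a > 0$, while the constant $Q(a,b) = \frac{(1/4-b)^2}{2(1-a)}$ captures a boundary residual scaled by the minimum step-size $(1-a)/2$ of the $t_k$-sequence; matching these exact constants is what forces the particular parameter-dependent form of the bound.
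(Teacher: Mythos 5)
Your reduction of the lemma to the single coefficient inequality
\begin{equation*}
\frac{1}{t_p}\sum_{l=1}^{p}\zeta_l(a,b) \;-\; \zeta_p(a,b) \;\ge\; \frac{a^2}{2}(p-1) + Q(a,b), \qquad p\ge 1,
\end{equation*}
is correct and is in substance the paper's own reduction: the paper plugs the multi-step bound of \cref{coro:inequal-sigma} (with $k_1=p$, $k_2=k+1$) into each one-step inequality of \cref{lem:Lyaounov1}, sums over $p$, and exchanges the order of the resulting double sum; your telescoping of the augmented quantity $\mathcal G_k$ followed by Abel summation is the same manipulation in different notation, and both land on the identical residual $\frac{A_p}{t_p}-\zeta_p$ (with the same non-negative discarded terms).

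The gap is in your plan for the ``hard part.'' Lower-bounding $a\bigl(\sum_{p=1}^{k}t_p-t_k^2\bigr)$ by combining $t_p\ge 1+\frac{1-a}{2}(p-1)$ for the sum with $t_k\le 1+\beta(k-1)$, $\beta=\frac{1-a+\sqrt{4b-a^2}}{2}$, for $t_k^2$ fails when $a>0$: the resulting lower bound has leading term $a\bigl(\frac{1-a}{4}-\beta^2\bigr)k^2$, and $\frac{1-a}{4}-\beta^2$ is negative for, e.g., $a=\frac12$, $b=\frac14$ (there $\beta^2\approx 0.466>0.125$), so the bound tends to $-\infty$ and proves nothing. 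The reason is that $\sum_{p\le k}t_p$ and $t_k^2$ are both of order $k^2$, and the upper bound in \cref{lem:tk-properties}(ii) is not asymptotically tight (the true growth is $t_k\sim\frac{1-a}{2}k$, not $\beta k$ when $b>\frac{a^2}{4}$), so two one-sided linear estimates cannot detect the cancellation. The paper's proof instead telescopes the recursion in \cref{lem:tk-properties}(iii) into the exact identity $\sum_{l=1}^{p-1}t_l=\frac{t_p^2-t_p}{1-a}+\bigl(\frac14-b\bigr)\frac{p-1}{1-a}$, which turns the $a$-weighted piece into $a\bigl(\sum_{l=1}^{p}t_l-t_p^2\bigr)=\frac{a^2\,t_p(t_p-1)}{1-a}+\frac{a(1/4-b)(p-1)}{1-a}$ \emph{exactly}; dividing by $t_p$ and only then using $t_p-1\ge\frac{1-a}{2}(p-1)$ produces the term $\frac{a^2}{2}(p-1)$, while the $\bigl(\frac14-b\bigr)$-weighted piece $\bigl(\frac14-b\bigr)\frac{p-t_p}{t_p}$ is handled by the upper bound on $t_p$ as you describe and yields $Q(a,b)$. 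Without that exact identity the coefficient estimate, and hence the lemma, is not established.
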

\begin{proof}
According to \cref{coro:inequal-sigma}, for any $ p $ and $ k \ge p $, we have  
$$\sigma_{k+1}(z) - \frac{1}{2s} \left[ \|x_p - x_{p-1}\|^2 - \|x_{k+1} - x_k\|^2 \right] + \frac{1}{2s} \sum_{l=p}^{k} \frac{1}{t_l} \|x_l - x_{l-1}\|^2 \le \sigma_p(z).$$  
Further, by \cref{lem:Lyaounov1}, we obtain  
$$\begin{aligned}\mathcal{E}_{p+1}(z) - \mathcal{E}_p(z) + \zeta_p(a,b) \sigma_{k+1}(z)& + \frac{\zeta_p(a,b)}{2s} \sum_{l=p}^{k} \frac{1}{t_l} \|x_l - x_{l-1}\|^2 \\&- \frac{\zeta_p(a,b)}{2s} \|x_p - x_{p-1}\|^2 \le 0.\end{aligned}$$  
Summing the above inequality from $ p = 1 $ to $ k $, we obtain  
\begin{equation}\label{eq:pp}
\begin{aligned}
\mathcal{E}_{k+1}(z) - \mathcal{E}_1(z) + \left( \sum_{p=1}^{k} \zeta_p(a,b) \right) \sigma_{k+1}(z) & + \sum_{p=1}^{k} \frac{\zeta_p(a,b)}{2s} \sum_{l=p}^{k} \frac{1}{t_l} \|x_l - x_{l-1}\|^2 \\
& - \sum_{p=1}^{k} \frac{\zeta_p(a,b)}{2s} \|x_p - x_{p-1}\|^2 \le 0.
\end{aligned}
\end{equation}  
Note that  
$$
L_1 := \sum_{p=1}^{k} \frac{\zeta_p(a,b)}{2s} \sum_{l=p}^{k} \frac{1}{t_l} \|x_l - x_{l-1}\|^2 = \frac{1}{2s} \sum_{p=1}^{k} \left( \sum_{l=1}^{p} \zeta_l(a,b) \right) \frac{1}{t_p} \|x_p - x_{p-1}\|^2.
$$  
Thus,  
$$
\begin{aligned}
L_1 - \sum_{p=1}^{k} \frac{\zeta_p(a,b)}{2s} \|x_p - x_{p-1}\|^2 = \frac{1}{2s} \sum_{p=1}^{k} \left[ \left( \sum_{l=1}^{p} \zeta_l(a,b) \right) \frac{1}{t_p} - \zeta_p(a,b) \right] \|x_p - x_{p-1}\|^2.
\end{aligned}
$$  
Since  
$$
\begin{aligned}
\left( \sum_{l=1}^{p} \zeta_l(a,b) \right) \frac{1}{t_p} - \zeta_p(a,b) = \frac{1}{t_p} \left[ a \left( \sum_{l=1}^{p-1} t_l - t_p^2 + t_p \right) + \left( \frac{1}{4} - b \right) (p - t_p) \right],
\end{aligned}
$$  
and $ t_1 = 1 $, by \cref{lem:tk-properties} (iii),  
$$
\begin{aligned}
-t_p^2 + t_p &= \sum_{l=1}^{p-1} \left[ -t_{l+1}^2 + t_{l+1} - (-t_l^2 + t_l) \right] = \sum_{l=1}^{p-1} \left( \frac{1}{4} + (a - 1)t_l - b \right) \\
&= -(1 - a) \sum_{l=1}^{p-1} t_l + \left( \frac{1}{4} - b \right)(p - 1),
\end{aligned}
$$  
we obtain  
$$
\sum_{l=1}^{p-1} t_l = \frac{t_p^2 - t_p}{1 - a} + \left( \frac{1}{4} - b \right) \frac{p - 1}{1 - a}.
$$  
Therefore,  
$$
\begin{aligned}
&~~~~~L_1 - \sum_{p=1}^{k} \frac{\zeta_p(a,b)}{2s} \|x_p - x_{p-1}\|^2 \\&= \frac{1}{2s(1 - a)} \sum_{p=1}^{k} \left[ a^2(t_p - 1) + \left( \frac{1}{4} - b \right) \frac{p - t_p + a(t_p - 1)}{t_p} \right] \|x_p - x_{p-1}\|^2.
\end{aligned}
$$  
Since $ t_p \le \frac{1 - a + \sqrt{4b - a^2}}{2}(p - 1) + 1 $ (\cref{lem:tk-properties}),  
$$
\begin{aligned}
p - t_p &\ge p - \frac{1 - a + \sqrt{4b - a^2}}{2}(p - 1) - 1 \\
&= \frac{1 + a - \sqrt{4b - a^2}}{2}(p - 1) \ge \frac{1 - \sqrt{4b - a^2}}{2}(p - 1).
\end{aligned}
$$  
Moreover, since $ t_k \ge 1 $ and $ b \in (a^2/4, 1/4] $, for any $ p \ge 2 $,  
$$
\begin{aligned}
& ~~~~~\frac{1}{1 - a} \left[ a^2(t_p - 1) + \left( \frac{1}{4} - b \right) \frac{p - t_p + a(t_p - 1)}{t_p} \right] \\
&=  \frac{a^2}{1 - a}(t_p - 1) + \frac{1}{1 - a} \left( \frac{1}{4} - b \right) \frac{ \frac{1 - \sqrt{4b - a^2}}{2}(p - 1) }{t_p} + \frac{1}{1 - a} \left( \frac{1}{4} - b \right) \frac{a(t_p - 1)}{t_p} \\
&\ge  \frac{a^2}{1 - a} \left( \frac{1 - a}{2}p + \frac{a - 1}{2} \right) + \frac{1}{1 - a} \left( \frac{1}{4} - b \right) \frac{ \frac{1 - \sqrt{4b - a^2}}{2}(p - 1) }{p} \\
&\ge  \frac{a^2}{2}(p - 1) + \frac{1}{2(1 - a)} \left( \frac{1}{4} - b \right) (1 - \sqrt{4b - a^2}) \left( 1 - \frac{1}{p} \right) \\
&\ge\frac{a^2}{2}(p - 1) + \frac{1}{2(1 - a)} \left( \frac{1}{4} - b \right)^2.
\end{aligned}
$$  
Define $ Q(a,b) = \frac{1}{2(1 - a)} \left( \frac{1}{4} - b \right)^2$. Since $ x_1 = x_0 $, we have  
$$
L_1 - \sum_{p=1}^{k} \frac{\zeta_p(a,b)}{2s} \|x_p - x_{p-1}\|^2 \ge \frac{1}{2s} \sum_{p=1}^{k} \left( \frac{a^2}{2}(p - 1) + Q(a,b) \right) \|x_p - x_{p-1}\|^2.
$$  
Combining this with \cref{eq:pp}, the conclusion follows.
\end{proof}

The following lemma presents the convergence rate results of the algorithm.
\begin{theorem}\label{thm:convergence-rate-and-sequence}
Suppose \cref{assum:Lj-muj,assum:levelset} hold. Let $\{x_k\}$ and $\{t_k\}$ be the sequences  generated by \cref{algo:Multiobjective} with $a\in[0,1)$ and $b\in[\frac{a^{2}}{4},\frac{1}{4}]$. Then, 
\begin{itemize}
    \item[(i)] $u_0(x_k ) =O(1/k^2)$;
    \item[(ii)] $\sum_{k=1}^\infty \left(ak-b+\frac14\right)\|x_{k}-x_{k-1}\|^2<+\infty$. 
\end{itemize}
\end{theorem}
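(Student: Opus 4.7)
The plan is to deduce both assertions from \cref{lem:Lyapunov2} by choosing suitable anchor points $z$, using \cref{coro:levelset} to control $\|x_k-z\|$: the iterates stay in $\mathcal{L}(F,F(x_0))$, which has diameter at most $2R_{x_0}$ by \cref{assum:levelset}.

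For (i), the key difficulty is that $\sigma_{k+1}(z)$ is not automatically nonnegative, so one cannot directly lower-bound $\mathcal{E}_{k+1}(z)$ by $t_{k+1}^2\sigma_{k+1}(z)$ and then chain with \cref{lem:Lyapunov2}. I would restrict $z$ to the sublevel set $\mathcal{L}(F,F(x_{k+1}))$, on which $\sigma_{k+1}(z)\geq 0$ by definition. Dropping the nonnegative sum term in \cref{lem:Lyapunov2} and using $\|\eta_{k+1}-z\|^2\geq 0$ yields
$$t_{k+1}^2\,\sigma_{k+1}(z)\leq \mathcal{E}_1(z)=\sigma_1(z)+\frac{1}{2s}\|x_0-z\|^2\leq u_0(x_0)+\frac{2R_{x_0}^2}{s}.$$
Since $\mathcal{L}(F,F(x_{k+1}))\subseteq \mathcal{L}(F,F(x_0))$ by \cref{coro:levelset}, and since the supremum defining $u_0(x_{k+1})$ is attained on $\mathcal{L}(F,F(x_{k+1}))$ (any $z$ outside this set has $\sigma_{k+1}(z)<0$, which cannot exceed the nonnegative sup), taking the supremum over this set produces $t_{k+1}^2\,u_0(x_{k+1})\leq u_0(x_0)+2R_{x_0}^2/s$. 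Combined with $t_{k+1}\geq \tfrac{1-a}{2}(k+1)+\tfrac{1+a}{2}$ from \cref{lem:tk-properties}(i) (so $t_{k+1}=\Theta(k)$ since $a<1$), this gives $u_0(x_{k+1})=O(1/k^2)$.

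For (ii), I would fix a weakly Pareto optimal solution $x^*\in \mathcal{L}(F,F(x_0))$ (existence follows from compactness of the level set and continuity of $F$). Weak Pareto optimality forces, for every $k$, the existence of an index $i$ with $f_i(x_{k+1})\geq f_i(x^*)$, hence $\sigma_{k+1}(x^*)\geq 0$. Applying \cref{lem:Lyapunov2} with $z=x^*$ and discarding the nonnegative terms $\mathcal{E}_{k+1}(x^*)$ and $\sigma_{k+1}(x^*)\sum_p\zeta_p(a,b)$ yields
$$\sum_{p=1}^{k}\Big(\tfrac{a^2}{2}(p-1)+Q(a,b)\Big)\|x_p-x_{p-1}\|^2 \leq 2s\,\mathcal{E}_1(x^*)\leq 2s\,u_0(x_0)+4R_{x_0}^2,$$
and letting $k\to\infty$ shows the left-hand series converges.

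The final task is to convert this into the target weight $ak-b+\tfrac{1}{4}$. Writing $ak-b+\tfrac{1}{4}=a(k-1)+(a-b+\tfrac{1}{4})$, the task reduces to bounding $\sum a(k-1)\|x_k-x_{k-1}\|^2$ and $\sum \|x_k-x_{k-1}\|^2$. For $a>0$ the first is immediate from $a(k-1)\leq \tfrac{2}{a}\cdot\tfrac{a^2}{2}(k-1)$; for the second, $Q(a,b)>0$ whenever $b<1/4$ gives summability directly. The main obstacle is the degenerate case $b=1/4$ (where $Q(a,b)=0$): for $a>0$ I would use the elementary trick $\sum_{k\geq 2}\|x_k-x_{k-1}\|^2\leq \sum_{k\geq 2}(k-1)\|x_k-x_{k-1}\|^2<\infty$ to recover summability of $\sum\|x_k-x_{k-1}\|^2$, while for $a=0$ and $b=1/4$ the target weight vanishes identically and the claim is trivial.
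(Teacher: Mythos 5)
Part (i) of your argument is correct and is essentially the paper's proof with cosmetic differences: you restrict the anchor $z$ to $\mathcal{L}(F,F(x_{k+1}))$ so that $\sigma_{k+1}(z)\ge 0$ and both $z$-dependent remainder terms of \cref{lem:Lyapunov2} can be discarded, whereas the paper takes the supremum over the larger set $\mathcal{L}(F,F(x_0))$ and keeps the coefficient $t_{k+1}^2+\sum_p\zeta_p(a,b)$; both give $O(1/k^2)$ and your observation that the supremum defining $u_0(x_{k+1})$ may be computed over $\mathcal{L}(F,F(x_{k+1}))$ is the right justification.

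Part (ii) has a genuine gap. You claim that for a weakly Pareto optimal $x^*$, weak Pareto optimality ``forces the existence of an index $i$ with $f_i(x_{k+1})\ge f_i(x^*)$, hence $\sigma_{k+1}(x^*)\ge 0$.'' The existence of \emph{one} such index only gives $\max_{i}\bigl(f_i(x_{k+1})-f_i(x^*)\bigr)\ge 0$; the quantity $\sigma_{k+1}(x^*)=\min_{i}\bigl(f_i(x_{k+1})-f_i(x^*)\bigr)$ requires the inequality for \emph{every} $i$, which weak Pareto optimality does not provide. Concretely, with $m=2$ the iterates may approach a weak Pareto point $\bar x$ that is incomparable with $x^*$ (say $f_1(\bar x)<f_1(x^*)$ and $f_2(\bar x)>f_2(x^*)$), in which case $\sigma_{k+1}(x^*)<0$ for large $k$, and then neither $\mathcal{E}_{k+1}(x^*)$ nor $\sigma_{k+1}(x^*)\sum_p\zeta_p(a,b)$ is known to be nonnegative, so both discards fail. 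This is precisely the non-negativity obstruction the paper flags in the introduction. The repair is easy and you already have the tools: either take $z=x_{k+1}$ (a $k$-dependent anchor), for which $\sigma_{k+1}(x_{k+1})=0$ and $\mathcal{E}_{k+1}(x_{k+1})=\frac{1}{2s}\|\eta_{k+1}-x_{k+1}\|^2\ge 0$, so \cref{lem:Lyapunov2} directly yields
\begin{equation*}
\frac{1}{2s}\sum_{p=1}^{k}\Bigl(\tfrac{a^2}{2}(p-1)+Q(a,b)\Bigr)\|x_p-x_{p-1}\|^2\le \mathcal{E}_1(x_{k+1})\le u_0(x_0)+\tfrac{2}{s}R_{x_0}^2;
\end{equation*}
or, as the paper does, first take the supremum over $z\in\mathcal{L}(F,F(x_0))$, which converts $\sigma_{k+1}(z)$ into $u_0(x_{k+1})\ge 0$ before any term is dropped. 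Your final conversion of the summable weight $\frac{a^2}{2}(k-1)+Q(a,b)$ into $ak-b+\frac14$, including the case analysis at $b=\frac14$ and $a=0$, matches the paper and is fine once the bound above is restored.
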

\begin{proof}
(i) From the definition of $\mathcal{E}_k(z)$ and \cref{lem:Lyapunov2}, we obtain  
\begin{equation}\label{eq:point-convergnece-1} 
\begin{aligned} 
\left(t_k^2 + \sum_{p=1}^{k} \zeta_p(a,b)\right) \sigma_{k+1}(z) + \frac{1}{2s} \sum_{p=1}^k \left( \frac{a^2}{2}(p-1) + Q(a,b) \right) \|x_p - x_{p-1}\|^2 &\\\le \mathcal{E}_1(z).&
\end{aligned}
\end{equation} 
By \cref{coro:levelset}, $\{x_k\} \subseteq \mathcal{L}(F, F(x_0))$, we have  
$$
\sup_{z \in \mathbb{R}^n} \sigma_{k+1}(z) = \sup_{z \in \mathcal{L}(F, F(x_0))} \sigma_{k+1}(z).
$$  
Moreover,  
$$
\begin{aligned} 
\sup_{z \in \mathcal{L}(F, F(x_0))} \mathcal{E}_1(z) \le u_0(x_0) + \sup_{z \in \mathcal{L}(F, F(x_0))} \frac{1}{2s} \|x_0 - z\|^2 \le u_0(x_0) + \frac{1}{s} R_{x_0}^2.
\end{aligned} 
$$  
Taking the supremum over $z \in \mathcal{L}(F, F(x_0))$ in inequality \cref{eq:point-convergnece-1}, we derive  
$$
\begin{aligned} 
\left(t_{k+1}^2 + \sum_{p=1}^{k} \zeta_p(a,b)\right) u_0(x_{k+1}) + \frac{1}{2s} \sum_{p=1}^k \left( \frac{a^2}{2}(p-1) + Q(a,b) \right) \|x_p - x_{p-1}\|^2 &\\\le u_0(x_0) + \frac{R_{x_0}^2}{s}.&
\end{aligned} 
$$  
Thus, for any $k \ge 1$,  
$$
u_0(x_{k+1}) \le \frac{u_0(x_0) + \frac{R_{x_0}^2}{s}}{t_{k+1}^2 + \sum_{p=1}^{k+1} \zeta_p(a,b)} \le \frac{s u_0(x_0) + R_{x_0}^2}{s(1-a)^2 (k+1)^2}.
$$  
The second inequality follows from \cref{lem:tk-properties}.  

(ii) Similarly, for any $k \ge 1$,  
$$
\sum_{p=1}^k \left( \frac{a^2}{2}(p-1) + Q(a,b) \right) \|x_p - x_{p-1}\|^2 \le s u_0(x_0) + R_{x_0}^2.
$$  
When $a = 0$ and $b = \frac{1}{4}$, $\frac{a^2}{2}(p-1) + Q(a,b) = 0$, so the conclusion holds trivially. When $a > 0$, for any $k \ge 1$,  
$$
\sum_{p=1}^k p \|x_p - x_{p-1}\|^2 \le 2 \sum_{p=1}^k (p-1) \|x_p - x_{p-1}\|^2 \le \frac{4}{a^2} (s u_0(x_0) + R_{x_0}^2).
$$  
When $b < \frac{1}{4}$, we have $Q(a,b) > 0$, and for any $k \ge 1$,  
$$
\sum_{p=1}^k \|x_p - x_{p-1}\|^2 < \frac{s u_0(x_0) + R_{x_0}^2}{Q(a,b)}.
$$  
Let 
$$
C_{a,b}:=\left\{\begin{aligned}
&0,&&a = 0, b =\frac14;\\
&\frac{4}{a}(su_0(x_0)+R_{x_0}),&&a\in(0,1),b=\frac{1}{4};\\
&\left(\frac14-b\right)\frac{su_0(x_0)+R_{x_0}}{Q(a,b)}, && a=0,b\in\left(0,\frac14\right);\\
&\frac{4}{a}(su_0(x_0)+R_{x_0})+\left(\frac14-b\right)\frac{su_0(x_0)+R_{x_0}}{Q(a,b)}&& a\in(0,1),b\in\left(0,\frac14\right). 
\end{aligned}\right. 
$$
Therefore, for any $k \ge 1$,  
$$
\sum_{p=1}^k \left( a p - b + \frac{1}{4} \right) \|x_p - x_{p-1}\|^2 \le C_{a,b}. 
$$  
Taking the limit as $k \to \infty$, the result follows. 
\end{proof}
\subsection{Point convergence }
In this subsection, under the condition that the objective functions are bounded below, we present the convergence results of the sequence generated by \cref{algo:Multiobjective}. The following lemma shows that $\{f_i(x_k)\}$ is convergent. Moreover, for the cluster points $z^*$ of $\{x_k\}$, $\{\sigma_k(z^*)\}$ is bounded below.
\begin{lemma}\label{lem:point-convergence-1}
Suppose \cref{assum:Lj-muj,assum:levelset} hold. Let $\{x_k\}$ and $\{t_k\}$ be the sequences  generated by \cref{algo:Multiobjective} with $a\in[0,1)$ and $b\in[\frac{a^{2}}{4},\frac{1}{4}]$. Suppose $f_i$ is bounded below for $i=1,\cdots,m$, then, 
\begin{itemize}
    \item[(i)] $\lim_{k\to \infty }f_i(x_k)=f_i^\infty$ exists;
    \item[(ii)]  Let $z^*$ be any cluster point of $\{x_k\}$. Then for any $k \ge 1$,  
    $$-\sigma_{k}(z^*) \le \frac{1}{2s} \|x_k - x_{k-1}\|^2.$$
\end{itemize}
\end{lemma}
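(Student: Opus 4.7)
The approach mirrors the continuous-time argument of \cref{lem:Dynamical-system-point-convergence-1}, with
$$\mathcal W_k := u_0(x_k) + \frac{1}{2s}\|x_k - x_{k-1}\|^2$$
as the discrete analogue of $\mathcal W(t) = u_0(x(t)) + \frac{1}{2}\|\dot x(t)\|^2$. Both conclusions will follow once I show that $\{\mathcal W_k\}$ converges and that $\|x_k - x_{k-1}\| \to 0$.

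For part (i), I start from the monotonicity $\mathcal W_k^i \ge \mathcal W_{k+1}^i$ given by \cref{lem:aux1}(i), subtract $f_i(z)$ on both sides, then pass to $\min_{i=1,\dots,m}$ and to $\sup_{z\in \mathbb R^n}$ successively. Both operations preserve the inequality, yielding $\mathcal W_k \ge \mathcal W_{k+1}$, and since $\mathcal W_k \ge 0$ the limit $\lim_{k\to\infty}\mathcal W_k \in \mathbb R$ exists. Because $u_0(x_k)\to 0$ by \cref{thm:convergence-rate-and-sequence}(i), the quantity $\|x_k - x_{k-1}\|^2$ must converge to some $c \ge 0$. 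I rule out $c>0$ using the weak summability $\sum_k \frac{1}{t_k}\|x_k-x_{k-1}\|^2 < +\infty$ from \cref{lem:aux1}(ii), combined with $\sum_k \frac{1}{t_k} = +\infty$, which holds because $t_k\le k$ by \cref{lem:tk-properties}(ii); if $c>0$, the tail of $\sum_k \frac{1}{t_k}\|x_k-x_{k-1}\|^2$ would dominate $\tfrac{c}{2}\sum_k \frac{1}{t_k}$, a contradiction. Hence $\|x_k - x_{k-1}\| \to 0$, and consequently $f_i(x_k) = \mathcal W_k^i - \frac{1}{2s}\|x_k - x_{k-1}\|^2$ converges to $f_i^\infty := \lim_k \mathcal W_k^i \in \mathbb R$.

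For part (ii), let $z^*$ be any cluster point of $\{x_k\}$, with $x_{k_j}\to z^*$ along some subsequence. Continuity of $f_i$ and (i) give $f_i(z^*) = \lim_j f_i(x_{k_j}) = f_i^\infty$. Since $\mathcal W_k^i$ is non-increasing with limit $f_i^\infty$, one has $\mathcal W_k^i \ge f_i(z^*)$ for every $k$, which rearranges to $f_i(z^*) - f_i(x_k) \le \frac{1}{2s}\|x_k - x_{k-1}\|^2$. Taking the maximum over $i$ yields $-\sigma_k(z^*) = \max_i (f_i(z^*) - f_i(x_k)) \le \frac{1}{2s}\|x_k - x_{k-1}\|^2$.

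The main obstacle is the claim $\|x_k - x_{k-1}\| \to 0$. The parameter-weighted summability in \cref{thm:convergence-rate-and-sequence}(ii) degenerates at the endpoint $(a,b) = (0, 1/4)$ and cannot be invoked uniformly across the admissible parameter range; I therefore rely instead on the always-nontrivial weighted summability of \cref{lem:aux1}(ii), paired with the harmonic-type divergence $\sum_k 1/t_k = +\infty$. This is the discrete counterpart of how the continuous proof exploits $\int_1^{\infty} \frac{1}{t}\|\dot x(t)\|^2\,dt < +\infty$ to force the kinetic term to vanish.
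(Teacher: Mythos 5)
Your argument is correct and follows essentially the same route as the paper: monotonicity of $\mathcal W_k := u_0(x_k)+\frac{1}{2s}\|x_k-x_{k-1}\|^2$ (the paper gets it directly from \cref{coro:inequal-sigma}, you re-derive the same inequality from \cref{lem:aux1}(i) by subtracting $f_i(z)$ and passing to $\min_i$ and $\sup_z$), then $u_0(x_k)\to 0$ plus the summability $\sum_k \frac{1}{t_k}\|x_k-x_{k-1}\|^2<+\infty$ with $t_k\le k$ to force $\|x_k-x_{k-1}\|\to 0$, and finally $\mathcal W_k^i\ge \inf_k\mathcal W_k^i=f_i^\infty=f_i(z^*)$ for part (ii). Your closing observation about avoiding \cref{thm:convergence-rate-and-sequence}(ii) in favor of \cref{lem:aux1}(ii) matches exactly what the paper does.
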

\begin{proof}
(i) By \cref{coro:inequal-sigma}, for any $ z \in \mathbb{R}^n $, we have  
\begin{equation}\label{eq:point-convergence-sigma-1}
\begin{aligned}
\sigma_{k+1}(z) + \frac{1}{2s} \|x_{k+1} - x_k\|^2 + \frac{1}{2s} \frac{1}{t_k} \|x_k - x_{k-1}\|^2 \le \sigma_k(z) + \frac{1}{2s} \|x_k - x_{k-1}\|^2.
\end{aligned}
\end{equation}
Let $ \mathcal{W}_k := u_0(x_k) + \frac{1}{2s} \|x_k - x_{k-1}\|^2 $. Taking the supremum over $ z \in \mathbb{R}^n $ in \cref{eq:point-convergence-sigma-1}, we obtain  
$$
0 \le \mathcal{W}_{k+1} \le \mathcal{W}_k.
$$  
Thus, $ \lim_{k \to \infty} \mathcal{W}_k = \mathcal{W}^* $ exists. By \cref{thm:convergence-rate-and-sequence}, $ \lim_{k \to \infty} u_0(x_k) = 0 $, so  
\begin{equation}\label{eq:point-convergence-xklimitexist}
\lim_{k \to \infty} \|x_k - x_{k-1}\|^2 = 2s \lim_{k \to \infty} (\mathcal{W}_k - u_0(x_k)) = 2s \mathcal{W}^*\in \mathbb{R}_{+}.
\end{equation} 
Since $ f_i $ is bounded below, by \cref{lem:tk-properties}(ii) and \cref{lem:aux1}, we have  
\begin{equation}\label{eq:point-convergence-summable} 
\sum_{k=1}^{\infty} \frac{1}{k} \|x_k - x_{k-1}\|^2 \le \sum_{k=1}^{\infty} \frac{1}{t_k} \|x_k - x_{k-1}\|^2 < +\infty.
\end{equation}
From \cref{eq:point-convergence-xklimitexist,eq:point-convergence-summable}, it follows that  
$$
\lim_{k \to \infty} \|x_k - x_{k-1}\|^2 = 0.
$$  
Since $ \{\mathcal{W}_k^i\} $ has a limit and is monotonically non-increasing (\cref{lem:aux1}),  
$$
\lim_{k \to \infty} f_i(x_k) = \lim_{k \to \infty} \left( \mathcal{W}_k^i - \frac{1}{2s} \|x_k - x_{k-1}\|^2 \right) = \inf_{k \ge 1} \mathcal{W}_k^i := f_i^\infty \in \mathbb{R}.
$$  

(ii) Since $ z^* $ is a cluster point of $ \{x_k\} $, $ f_i^\infty = f_i(z^*) $ holds for all $ i = 1, \cdots, m $. By the monotonic non-increasing property of $ \{\mathcal{W}_k^i\} $, for any $ k \ge 1 $,  
$$
f_i(x_k) + \frac{1}{2s} \|x_k - x_{k-1}\|^2 = \mathcal{W}_k^i \ge \inf_{k \ge 1} \mathcal{W}_k^i = f_i^\infty = f_i(z^*),
$$  
holds for all $ i = 1, \cdots, m $. Furthermore,  
$$
\sigma_k(z^*)+ \frac{1}{2s} \|x_k - x_{k-1}\|^2 = \min_{i=1,\cdots,m} (f_i(x_k) - f_i(z^*)) + \frac{1}{2s} \|x_k - x_{k-1}\|^2 \ge 0.
$$  
This completes the proof.
\end{proof}
\begin{lemma}\label{lem:Lyapunov-limit-exists}
Suppose \cref{assum:Lj-muj,assum:levelset} hold. Let $\{x_k\}$ and $\{t_k\}$ be the sequences  generated by \cref{algo:Multiobjective} with $a\in[0,1)$ and $b\in[\frac{a^{2}}{4},\frac{1}{4}]$.  For  $ z \in \mathbb{R}^n $, let $\{\mathcal{E}_k(z)\}$ be defined as in \cref{eq:Lyapunov-sequence}. Suppose $f_i$ is bounded below for $i=1,\cdots,m$, then for any cluster point $z^*$ of $\{x_k\}$, we have 
$$\lim_{k \to \infty}\mathcal{E}_k(z^*) =\mathcal E^*_{z^*}, $$exists. 
\end{lemma}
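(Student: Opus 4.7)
The plan is to show that $\mathcal{E}_k(z^*)$ is quasi-monotone in the sense that it becomes monotone non-increasing after subtracting a summable correction term, and then to establish a uniform lower bound. Convergence of a bounded below, monotone non-increasing sequence then yields the claim.

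First I would use Lemma \ref{lem:Lyaounov1}, namely $\mathcal{E}_{k+1}(z^*) - \mathcal{E}_k(z^*) + \zeta_k(a,b)\sigma_k(z^*) \le 0$. Since $b \le 1/4$ and $t_k \ge 1$, $\zeta_k(a,b) = at_k - b + 1/4 \ge 0$, so the obstruction to monotonicity is the possible negativity of $\sigma_k(z^*)$. Here Lemma \ref{lem:point-convergence-1}(ii) is the decisive input: it gives $-\sigma_k(z^*) \le \frac{1}{2s}\|x_k - x_{k-1}\|^2$, which yields
\[
\mathcal{E}_{k+1}(z^*) \le \mathcal{E}_k(z^*) + \frac{\zeta_k(a,b)}{2s}\|x_k - x_{k-1}\|^2.
\]
Using $t_k \le k$ from Lemma \ref{lem:tk-properties}(ii), we have $\zeta_k(a,b) \le ak - b + 1/4$, so Theorem \ref{thm:convergence-rate-and-sequence}(ii) gives $\sum_{k\ge 1}\zeta_k(a,b)\|x_k - x_{k-1}\|^2 < +\infty$. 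Therefore the auxiliary sequence $B_k := \mathcal{E}_k(z^*) - \frac{1}{2s}\sum_{j=1}^{k-1}\zeta_j(a,b)\|x_j - x_{j-1}\|^2$ is monotonically non-increasing.

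The main obstacle is to establish that $\mathcal{E}_k(z^*)$ (and hence $B_k$) is bounded below, since $t_k^2\sigma_k(z^*)$ can a priori tend to $-\infty$. The idea is to control $t_k^2\|x_k - x_{k-1}\|^2$ uniformly in $k$ by specializing Lemma \ref{lem:Lyapunov2} to the choice $z = x_{k+1}$. Then $\sigma_{k+1}(x_{k+1}) = 0$ and $\eta_{k+1} - x_{k+1} = (t_{k+1}-1)(x_{k+1} - x_k)$, so $\mathcal{E}_{k+1}(x_{k+1}) = \frac{(t_{k+1}-1)^2}{2s}\|x_{k+1} - x_k\|^2$; dropping the non-negative summation term on the left and bounding the right-hand side by $\mathcal{E}_1(x_{k+1}) \le u_0(x_0) + \frac{1}{2s}\|x_0 - x_{k+1}\|^2 \le u_0(x_0) + \frac{2R_{x_0}^2}{s}$ (using Corollary \ref{coro:levelset}), one obtains a constant $K$ with $(t_k-1)^2\|x_k-x_{k-1}\|^2 \le K$ for all $k$. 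The bound $t_k^2 \le 2((t_k-1)^2 + 1)$ combined with boundedness of $\{x_k\}$ then yields a uniform constant $D$ with $t_k^2\|x_k - x_{k-1}\|^2 \le D$.

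Combining this with Lemma \ref{lem:point-convergence-1}(ii),
\[
\mathcal{E}_k(z^*) \ge t_k^2 \sigma_k(z^*) \ge -\frac{t_k^2}{2s}\|x_k - x_{k-1}\|^2 \ge -\frac{D}{2s},
\]
so $\{B_k\}$ is bounded below. Being monotone non-increasing, $\lim_{k\to\infty} B_k$ exists in $\mathbb{R}$; since the correction sum converges, $\lim_{k\to\infty}\mathcal{E}_k(z^*) = \mathcal{E}^*_{z^*}$ exists, which completes the proof.
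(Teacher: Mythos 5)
Your proposal is correct and follows essentially the same route as the paper: both use \cref{lem:Lyaounov1} together with \cref{lem:point-convergence-1}(ii) and \cref{thm:convergence-rate-and-sequence}(ii) to show the positive parts of the increments are summable, and both obtain the uniform lower bound on $\mathcal{E}_k(z^*)$ by specializing \cref{lem:Lyapunov2} to $z=x_{k+1}$. Your passage from $(t_k-1)^2\|x_k-x_{k-1}\|^2\le K$ to $t_k^2\|x_k-x_{k-1}\|^2\le D$ via $t_k^2\le 2((t_k-1)^2+1)$ is a slightly cleaner bookkeeping of the same estimate the paper carries out with the ratio $\bigl(t_{k+1}/(t_{k+1}-1)\bigr)^2$.
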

\begin{proof}
According to \cref{lem:Lyapunov2}, taking $ z = x_{k+1} $, we obtain  
$$
\frac{(t_{k+1} - 1)^2}{2s} \|x_{k+1} - x_k\|^2 = \mathcal{E}_{k+1}(x_{k+1}) \le \mathcal{E}_1(x_{k+1}) \le u_0(x_0) + \frac{1}{2s} \|x_1 - x_{k+1}\|^2.
$$  
Since $ \{x_k\} \subseteq \mathcal{L}(F, F(x_0)) $ and this level set is bounded by Assumption 2, we have  
$$
\frac{(t_{k+1} - 1)^2}{2s} \|x_{k+1} - x_k\|^2 \le u_0(x_0) + \frac{R^2}{s} := C.
$$  
By \cref{lem:point-convergence-1}, we have  
\begin{equation}\label{eq:point-convergence-lyapunov-bounded}
\begin{aligned}
\mathcal{E}_{k+1}(z^*) &\ge t_{k+1}^2 \sigma_{k+1}(z^*) \ge \frac{t_{k+1}^2}{2s} \|x_{k+1} - x_k\|^2 \\
&\ge \left( \frac{t_{k+1}}{t_{k+1} - 1} \right)^2 \frac{(t_{k+1} - 1)^2}{2s} \|x_{k+1} - x_k\|^2 \\
&\ge \left( 1+\frac1k \right)^2 \frac{(t_{k+1} - 1)^2}{2s} \|x_{k+1} - x_k\|^2 \ge -C,
\end{aligned}
\end{equation}  
where the second last inequality holds based on \cref{lem:tk-properties}. Moreover, according to \cref{lem:Lyaounov1}, the following inequality holds,  
$$
\mathcal{E}_{k+1}(z^*) - \mathcal{E}_k(z^*) + \zeta_k(a,b) \sigma_k(z^*) \le 0.
$$  
 Note that $ \zeta_k(a,b) = a t_k - b + \frac{1}{4} \le a k - b + \frac{1}{4} $, we have
\begin{equation}\label{eq:point-convergence-lyapunovlimitexist-1}
\begin{aligned} 
\mathcal{E}_{k+1}(z^*) - \mathcal{E}_k(z^*)& \le -\zeta_k(a,b) \sigma_k(z^*) \le \frac{\zeta_k(a,b)}{2s} \|x_k - x_{k-1}\|^2\\& \le \frac{1}{2s} \left( a k - b + \frac{1}{4} \right) \|x_k - x_{k-1}\|^2.
\end{aligned}
\end{equation}  
By \cref{thm:convergence-rate-and-sequence}, the  right-hand side of \cref{eq:point-convergence-lyapunovlimitexist-1} is summable, and $ \mathcal{E}_k(z^*) $ is bounded below by \cref{eq:point-convergence-lyapunov-bounded}. Therefore, the limit of $ \{\mathcal{E}_k(z^*)\} $ exists.
\end{proof}
\begin{lemma}\cite{boct2025acceleratingDiagonalMethods}\label{lem:desecert}
    Let $\{\varphi_k\}$, $\{q_k\}$ be sequences of real and positive numbers respectively, and such that $\sum_{k=1}^{\infty} \frac{1}{q_k} = +\infty$ and, 
    \begin{equation}
    \lim_{k \to \infty} (\varphi_{k+1}+q_k(\varphi_{k+1}-\varphi_k)) = \ell  \in \mathbb{R}.  
    \end{equation}
    Then, $ \lim_{k \to \infty} \varphi_k = \ell $.
\end{lemma}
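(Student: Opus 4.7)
The plan is to rewrite the hypothesis as a discrete convex-combination recurrence and then invoke a standard asymptotic averaging argument. Setting $\psi_k := \varphi_{k+1} + q_k(\varphi_{k+1} - \varphi_k)$, one solves for $\varphi_{k+1}$ to obtain
$$\varphi_{k+1} = \frac{q_k}{1+q_k}\varphi_k + \frac{1}{1+q_k}\psi_k = (1-\beta_k)\varphi_k + \beta_k \psi_k,$$
where $\beta_k := 1/(1+q_k) \in (0,1)$. By hypothesis $\psi_k \to \ell$, so the task reduces to the following: if $\varphi_{k+1} = (1-\beta_k)\varphi_k + \beta_k \psi_k$ with $\beta_k \in (0,1)$, $\sum_k \beta_k = +\infty$, and $\psi_k \to \ell$, then $\varphi_k \to \ell$.

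I would first verify that $\sum_k \beta_k = +\infty$. For indices with $q_k \geq 1$ one has $\beta_k \geq 1/(2q_k)$, so these indices contribute at least $\tfrac{1}{2}\sum_{q_k\geq 1} 1/q_k$; for indices with $q_k < 1$ one has $\beta_k > 1/2$. In either regime the divergence is inherited from $\sum 1/q_k = +\infty$.

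Next, set $e_k := \varphi_k - \ell$ and $\eta_k := \psi_k - \ell$, so $e_{k+1} = (1-\beta_k)e_k + \beta_k \eta_k$ and $\eta_k \to 0$. Fix $\varepsilon>0$ and choose $K$ with $|\eta_k| \leq \varepsilon/2$ for all $k \geq K$. The triangle inequality followed by subtracting $\varepsilon/2$ from both sides gives
$$|e_{k+1}| - \tfrac{\varepsilon}{2} \leq (1-\beta_k)\bigl(|e_k| - \tfrac{\varepsilon}{2}\bigr).$$
Setting $u_k := \max(|e_k| - \varepsilon/2,\, 0)$, this implies $u_{k+1} \leq (1-\beta_k)\, u_k$ for $k \geq K$, and hence
$$u_k \leq u_K \prod_{j=K}^{k-1}(1-\beta_j) \longrightarrow 0,$$
since $\sum \beta_j = +\infty$ forces $\prod_j (1-\beta_j) = 0$. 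Therefore $\limsup_k |e_k| \leq \varepsilon/2$, and the arbitrariness of $\varepsilon$ yields $\varphi_k \to \ell$.

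The argument is elementary once the recurrence is recognised, so no deep obstacle is expected. The only care needed is the bookkeeping around the auxiliary sequence $u_k$, which simultaneously handles the case where $|e_k|$ falls below $\varepsilon/2$ at some finite step (after which the inequality propagates the bound trivially) and the case where $|e_k|$ stays above $\varepsilon/2$ indefinitely (in which case the product contraction, powered by $\sum \beta_j = +\infty$, drives the excess to zero).
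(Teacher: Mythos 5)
The paper does not prove this lemma; it is quoted verbatim from the cited reference [Bo\c{t} et al.] and used as a black box, so there is no in-paper argument to compare against. Your proof is correct and complete: the rewriting $\varphi_{k+1}=(1-\beta_k)\varphi_k+\beta_k\psi_k$ with $\beta_k=1/(1+q_k)$ is exact, the case split showing $\sum_k\beta_k=+\infty$ from $\sum_k 1/q_k=+\infty$ is sound (indices with $q_k<1$ contribute $\beta_k>1/2$, the rest satisfy $\beta_k\ge 1/(2q_k)$), and the contraction $u_{k+1}\le(1-\beta_k)u_k$ for $u_k=\max(|\varphi_k-\ell|-\varepsilon/2,0)$ together with $\prod_j(1-\beta_j)=0$ cleanly delivers $\limsup_k|\varphi_k-\ell|\le\varepsilon/2$. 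This is the standard averaging/quasi-Fejér argument one would expect for such a statement, and it can stand as a self-contained proof of the cited lemma.
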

\begin{theorem}\label{thm:point-convergence-result}
Suppose \cref{assum:Lj-muj,assum:levelset} hold. Let $\{x_k\}$ and $\{t_k\}$ be the sequences  generated by \cref{algo:Multiobjective} with $a\in[0,1)$ and $b\in[\frac{a^{2}}{4},\frac{1}{4}]$.  For  $ z \in \mathbb{R}^n $, let $\{\mathcal{E}_k(z)\}$ be defined as in \cref{eq:Lyapunov-sequence}. Suppose $f_i$ is bounded below for $i=1,\cdots,m$, then there exists $x^*$  such that $ \lim_{k \to \infty} x_k = x^* $, and $ x^*$ is a weakly Pareto optimal solution of \cref{eq:MOP}.
\end{theorem}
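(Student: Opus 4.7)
The plan is to mirror the continuous-time proof from the previous section, replacing the role of \cref{lem:continuous-limit} with its discrete analogue \cref{lem:desecert}. First, by \cref{coro:levelset} and \cref{assum:levelset}, the sequence $\{x_k\}$ lies in a bounded level set and hence has at least one cluster point. Let $z_1,z_2$ be any two cluster points; by \cref{lem:point-convergence-1}(i), $\lim_{k\to\infty} f_i(x_k) = f_i^\infty$ exists, and by continuity of $f_i$ we get $f_i(z_1)=f_i(z_2)=f_i^\infty$ for every $i$. Consequently $\sigma_k(z_1)=\sigma_k(z_2)$ for all $k$, so the two Lyapunov sequences differ only through the quadratic term:
\begin{equation*}
\mathcal{E}_k(z_1)-\mathcal{E}_k(z_2) \;=\; \tfrac{1}{2s}\bigl(\|\eta_k-z_1\|^2-\|\eta_k-z_2\|^2\bigr).
\end{equation*}
By \cref{lem:Lyapunov-limit-exists}, both $\mathcal{E}_k(z_1)$ and $\mathcal{E}_k(z_2)$ converge, so the right-hand side tends to a finite limit $\ell := \mathcal{E}^*_{z_1}-\mathcal{E}^*_{z_2}\in\mathbb{R}$.

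Next, the main algebraic step is to rewrite this quadratic difference in a form suitable for \cref{lem:desecert}. Setting $h_k(z):=\tfrac{1}{2s}\|x_k-z\|^2$ and using $\eta_k-z = (x_k-z)+(t_k-1)(x_k-x_{k-1})$ together with the elementary identity $\|x_k-z\|^2-\|x_{k-1}-z\|^2 = \langle x_k-x_{k-1},x_k+x_{k-1}-2z\rangle$, a direct expansion yields
\begin{equation*}
\tfrac{1}{2s}\bigl(\|\eta_k-z_1\|^2-\|\eta_k-z_2\|^2\bigr) \;=\; \varphi_k + (t_k-1)\bigl(\varphi_k-\varphi_{k-1}\bigr),
\end{equation*}
where $\varphi_k := h_k(z_1)-h_k(z_2)$. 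Combining this with the previous display gives $\varphi_{k+1}+(t_{k+1}-1)(\varphi_{k+1}-\varphi_k)\to\ell$. Since $t_{k+1}-1\le k$ by \cref{lem:tk-properties}(ii), we have $\sum_k 1/(t_{k+1}-1)=+\infty$, so \cref{lem:desecert} applies with $q_k=t_{k+1}-1$ and yields $\lim_{k\to\infty}\varphi_k=\ell$.

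To finish, we exploit the existence of subsequences $\{x_{k_j}\}\to z_1$ and $\{x_{l_j}\}\to z_2$. Evaluating the limit of $\varphi_k$ along each subsequence gives
\begin{equation*}
\ell = \lim_{j\to\infty}\bigl(h_{k_j}(z_1)-h_{k_j}(z_2)\bigr) = -\tfrac{1}{2s}\|z_1-z_2\|^2, \qquad \ell = \lim_{j\to\infty}\bigl(h_{l_j}(z_1)-h_{l_j}(z_2)\bigr) = +\tfrac{1}{2s}\|z_1-z_2\|^2,
\end{equation*}
forcing $z_1=z_2$. Thus $\{x_k\}$ has a unique cluster point and converges to some $x^*\in\mathbb{R}^n$. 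Weak Pareto optimality of $x^*$ follows from $u_0(x_k)\to 0$ (\cref{thm:convergence-rate-and-sequence}(i)), the lower semicontinuity and nonnegativity of $u_0$ (\cref{thm:weakpareto}(i),(iii)), and the characterization \cref{thm:weakpareto}(ii): $0\le u_0(x^*)\le\liminf_{k\to\infty}u_0(x_k)=0$. The main obstacle I expect is verifying the algebraic identity relating $\|\eta_k-z_1\|^2-\|\eta_k-z_2\|^2$ to a telescoping expression in $\varphi_k$; everything else is a careful bookkeeping adaptation of the continuous-time argument to the discrete Lyapunov framework, with \cref{lem:desecert} playing the role of \cref{lem:continuous-limit}.
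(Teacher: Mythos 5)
Your proposal is correct and follows essentially the same route as the paper: identify that $\sigma_k(z_1)=\sigma_k(z_2)$ so the Lyapunov difference reduces to the quadratic terms, rewrite that difference as $\varphi_k+(t_k-1)(\varphi_k-\varphi_{k-1})$ (your algebraic identity checks out and is exactly the computation the paper performs), invoke \cref{lem:Lyapunov-limit-exists} and \cref{lem:desecert} with $q_k=t_{k+1}-1$, and conclude uniqueness of the cluster point and weak Pareto optimality via \cref{thm:weakpareto}. The only differences are cosmetic (your $h_k$ carries the factor $\tfrac{1}{2s}$ and your indices are shifted by one relative to the paper's).
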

\begin{proof}
Let $ z_1 $ and $ z_2 $ be any two cluster points of the sequence $ \{x_k\} $. Define the sequence  
$$
h_k := \|x_k - z_1\|^2 - \|x_k - z_2\|^2.
$$  
Based on \cref{lem:point-convergence-1} and the continuity of $f_i$, we obtain $f_i(z_1)=f_i(z_2)$ for all $ i = 1, \cdots, m $, and hence $ \sigma_{k+1}(z_1) = \sigma_{k+1}(z_2) := \sigma_\infty $. Furthermore, according to the definition of $\mathcal E_k(z)$, the following equation holds:  
\begin{equation}\label{eq:point-convergence-result-1}
\begin{aligned}
\mathcal{E}_{k+1}(z_j) &= t_{k+1}^2 \sigma_\infty + \frac{1}{2s} \left\| (t_{k+1} - 1)(x_{k+1} - x_k) + x_{k+1} - z_j \right\|^2 \\
&= t_{k+1}^2 \sigma_\infty + \frac{(t_{k+1} - 1)^2}{2s} \|x_{k+1} - x_k\|^2 \\
&\quad + \frac{1}{2s} \left[ \|x_{k+1} - z_j\|^2 + 2(t_{k+1} - 1) \langle x_{k+1} - x_k, x_{k+1} - z_j \rangle \right], \quad j = 1, 2.
\end{aligned}
\end{equation}  
Observation shows that the first two terms on the right-hand side of \cref{eq:point-convergence-result-1} are the same for $\mathcal E_{k+1}(z_1)$ and $\mathcal E_{k+1}(z_2)$, so
\begin{equation}\label{eq:point-convergence-result-2}
\begin{aligned}
2s \left( \mathcal{E}_{k+1}(z_1) - \mathcal{E}_{k+1}(z_2) \right) &= \|x_{k+1} - z_1\|^2 - \|x_{k+1} - z_2\|^2 \\
&\quad + 2(t_{k+1} - 1) \langle x_{k+1} - x_k, z_2 - z_1 \rangle.
\end{aligned}
\end{equation}
Furthermore, since  
$$
\begin{aligned}
h_{k+1}&=\|x_{k+1} - z_1\|^2 - \|x_{k+1} - z_2\|^2 = \|z_2 - z_1\|^2 + 2 \langle x_{k+1} - z_2, z_2 - z_1 \rangle, \\
h_k&=\|x_k - z_1\|^2 - \|x_k - z_2\|^2 = \|z_2 - z_1\|^2 + 2 \langle x_k - z_2, z_2 - z_1 \rangle,
\end{aligned}
$$  
we have  
$$
h_{k+1} - h_k = 2 \langle x_{k+1} - x_k, z_2 - z_1 \rangle.
$$  
Combining with equation \cref{eq:point-convergence-result-2}, we obtain  
\begin{equation}\label{eq:point-convergence-result-3}
2s \left( \mathcal{E}_{k+1}(z_1) - \mathcal{E}_{k+1}(z_2) \right) = h_{k+1} + (t_{k+1} - 1)(h_{k+1} - h_k).
\end{equation}
By \cref{lem:Lyapunov-limit-exists}, the left-hand side of \cref{eq:point-convergence-result-3} has a limit as $ k \to \infty $, denoted as $ \ell $. Thus,  
$$
\lim_{k \to \infty} \left[ h_{k+1} + (t_{k+1} - 1)(h_{k+1} - h_k) \right] = \ell.
$$  
By \cref{lem:desecert} and the fact that $ \sum_{k=1}^\infty \frac{1}{t_{k+1} - 1} = +\infty $, it follows that $ \lim_{k \to \infty} h_k = \ell $.  

Now, consider subsequences $ \{\bar{x}_k\} $ and $ \{\widetilde{x}_k\} $ of $ \{x_k\} $ such that $ \lim_{k \to \infty} \bar{x}_k = z_1 $ and $ \lim_{k \to \infty} \widetilde{x}_k = z_2 $. Then,  
$$
\begin{aligned}
\ell &= \lim_{k \to \infty} \left( \|\bar{x}_k - z_1\|^2 - \|\bar{x}_k - z_2\|^2 \right) = -\|z_1 - z_2\|^2, \\
\ell &= \lim_{k \to \infty} \left( \|\widetilde{x}_k - z_1\|^2 - \|\widetilde{x}_k - z_2\|^2 \right) = \|z_1 - z_2\|^2.
\end{aligned}
$$  
Hence, $ \|z_1 - z_2\|^2 = 0 $, which implies $ z_1 = z_2 $. Therefore, there exists a unique cluster point $ x^* $ of $\{x_k\}$ such that $ \lim_{k \to \infty} x_k = x^* $.  

By \cref{thm:convergence-rate-and-sequence} and the lower semicontinuity of $ u_0(\cdot) $, we have  
$$
0 \le u_0(x^*) \le \liminf_{k \to \infty} u_0(x_k) = 0,
$$  
so $ u_0(x^*) = 0 $. By \cref{thm:weakpareto}, $ x^* $ is a weak Pareto solution of \cref{eq:MOP}.
\end{proof}

\section*{Acknowledgements} 
Thanks to Chengzhi Huang for his careful reading and insightful comments on the first draft of this article.
\section*{Funding}

\end{document}